\begin{document}

\title{Uncertainty principle for the two-sided quaternion
windowed linear canonical transform
}


\author{Wen-Biao Gao       \and
        Bing-Zhao Li* 
}


\institute{Wen-Biao Gao \at
           School of Mathematics and Statistics, Beijing Institute of Technology, Beijing 102488, P.R. China\\
             \email{wenbiaogao@163.com}          
           \and
           Corresponding author, Dr. Bing-Zhao Li \at
           Tel.: 86-10-81384471,  Fax: 86-10-81384701\\
              Beijing Key Laboratory on MCAACI, Beijing Institute of Technology, Beijing 102488, P.R. China\\
               \email{li$\_$bingzhao@bit.edu.cn}
}

\date{Received: date / Accepted: date}

\maketitle

\begin{abstract}
In this paper, we investigate the (two-sided) quaternion windowed linear canonical transform (QWLCT) and study the uncertainty principles associated with the QWLCT.
Firstly, several important properties of the QWLCT such as bounded, shift, modulation, orthogonality relation, are presented based on the spectral representation of the quaternionic linear canonical transform (QLCT). Secondly, Pitt's inequality and Lieb inequality for the QWLCT are explored.
Moreover, we study different kinds of uncertainty principles for the QWLCT, such as Logarithmic uncertainty principle, Entropic uncertainty principle, Lieb uncertainty principle and Donoho-Stark's uncertainty principle.
Finally, we give a numerical example and a potential application in signal recovery by using Donoho-Stark's uncertainty principle associated with the QWLCT.
\keywords{Quaternion Fourier transform \and Quaternion linear canonical transform \and Quaternion windowed linear canonical transform \and Uncertainty principle \and Signal recovery}
\end{abstract}

\section{Introduction}
\label{intro}
The linear canonical transform (LCT) plays an important role for chirp signal
analysis in parameter estimation, applied mathematics, signal
processing, radar system analysis, filter design, phase retrieval, and pattern recognition and optics\cite{7,8,9,10,11,12}. However, the LCT can't show the local LCT-frequency contents as a result of its global kernel. To meet this request, the LCT has been successfully applied to research the generalized windowed function in \cite{15,29}. The windowed linear canonical transform (WLCT) is a method devised to study signals whose spectral content changes
in time. As shown in \cite{14,8,51,52} some important properties of the WLCT such as the analogue of the Poisson summation formula, sampling formulas, Paley-Wiener theorem, and dual window solution
are discussed. The discrete WLCT is discussed in \cite{50}. It presents the time and LCT-frequency information, and is originally a local LCT distribution.

In the last few years, some authors have generalized the LCT to quaternion-valued signals, known as the quaternionic linear canonical
transform (QLCT). It transforms a quaternionic 2D signal into a quaternion
valued frequency domain signal which is an effective processing tool for color
image analysis.
 The QLCT was firstly studied in \cite{17} including prolate spheroidal wave signals and uncertainty principles \cite{19}. Some useful properties of
the QLCT such as linearity, reconstruction formula, continuity, boundedness, positivity inversion formula and the uncertainty
principle were established in \cite{17,13,18,20,21,30}. An application of the QLCT to study of generalized
swept-frequency filters was introduced in \cite{23}.
According to the QLCT, Xiong and Fu \cite{XF2015} have proposed the two-side quaternion linear canonical transform
associated with the real window function-called the quaternion window linear canonical transform (QWLCT),
and they studied several
properties and Heisenberg's uncertainty principle for the QWLCT \cite{XF2015}. Different from the \cite{XF2015},
based on the quaternion window function, we \cite{37} have investigated some properties of the two-side QWLCT, and gave an example to illustrative the QWLCT is highly effective.

On the other hand, the uncertainty principle plays a vital role in time-frequency signal analysis and quantun mechanics \cite{1,2,3,4,5,6}.
It was first proposed by the German physicist W. Heisenberg in 1927 \cite{6}.
In quantum mechanics, an uncertainty principle
asserts that one cannot make certain of the position and velocity of an electron
(or any particle) at the same time. That is, increasing the knowledge
of the position decreases the knowledge of the velocity or momentum of an
electron. In signal processing, an uncertainty principle states that the product
of the variances of the signal in the time and frequency domains has a lower bound.
There are many different kinds of uncertainty principles, for instance, Heisenberg uncertainty principle \cite{37,47,53,54},
 Logarithmic uncertainty principle \cite{36}, Hardy's uncertainty principle \cite{47,48}, Beurling's uncertainty principle \cite{49},
 Lieb uncertainty principle \cite{15,40}, Donoho-Stark's uncertainty principle \cite{42,45,46}.
 The Lieb uncertainty principle for the WLCT has been discussed
in \cite{15}, which takes the LCT version as one of its special cases.
 In \cite{53}, Xu et al. studied the uncertainty relation for the WLCT in two WLCT domains. Huang
et al. \cite{54} discussed the uncertainty principle and orthogonal condition for the
WLCT in one WLCT domains.
In \cite{XF2015}, based on
the real window function, the Heisenberg's uncertainty principle for the QWLCT has been obtained.

However, to the best of our knowledge, other kinds of the uncertainty principles for the QWLCT have not been found in the literature.
The aim of this paper is to obtain several uncertainty principles for the QWLCT.
First of all, some properties of the QWLCT are reviewed.
Second, based on the relationship between the QWLCT and the quaternion Fourier transform (QFT) \cite{47,48,49,51,52}, we establish the Pitt's inequality and Lieb inequality associated with the QWLCT.
Third, the uncertainty principles for the QWLCT such as Logarithmic uncertainty principle, Entropic uncertainty principle,
Lieb uncertainty principle and Donoho-Stark's uncertainty principle are obtained.
Finally, a numerical example and a potential application in signal recovery associated with Donoho-Stark's uncertainty principle are given.

The paper is organized as follows: Section 2 gives a brief introduction to some general
definitions and basic properties of quaternion algebra, QLCTs of 2D quaternion-valued signals.
We present the definition and the properties of the QWLCT in Section 3.
In Section 4, provides the uncertainty principles associated with the QWLCT. We give a numerical example and a potential application in Section 5.
In Section 6, some conclusions are drawn.
\section{Preliminaries}
\label{sec:1}
In this section, we mainly review some basic facts on the quaternion algebra and the QLCT, which will be needed throughout the paper.
\subsection{Quaternion algebra}
The quaternion algebra is an extension of the complex number to 4D algebra. It was first invented by W. R. Hamilton in 1843 and classically denoted by $\mathbb{H}$ in his honor. Every element of $\mathbb{H}$ has a Cartesian form given by
\begin{align}
\mathbb{H}=\left\lbrace q|q:=[q]_0+\mathbf{i}[q]_1+\mathbf{j}[q]_2+\mathbf{k}[q]_3, [q]_i\in\mathbb{R}, i=0,1,2,3\right\rbrace
\end{align}
where $\mathbf{i}, \mathbf{j}, \mathbf{k}$ are imaginary units obeying Hamilton's multiplication rules (see \cite{16})
\begin{align}
\begin{split}
		&\mathbf{i}^2=\mathbf{j}^2=\mathbf{k}^2=-1,\\
&\mathbf{i}\mathbf{j}=-\mathbf{j}\mathbf{i}=\mathbf{k},\,\mathbf{j}\mathbf{k}=-\mathbf{k}\mathbf{j}=\mathbf{i},\,\mathbf{k}\mathbf{i}
=-\mathbf{i}\mathbf{k}=\mathbf{j}.
\end{split}
\end{align}
Let $[q]_{0}$ and $q=\mathbf{i}[q]_1+\mathbf{j}[q]_2+\mathbf{k}[q]_3$ denote the real scalar part and the vector part of quaternion number $q=[q]_0+\mathbf{i}[q]_1+\mathbf{j}[q]_2+\mathbf{k}[q]_3$, respectively. Then, from \cite{53}, the
real scalar part has a cyclic multiplication symmetry
\begin{align}
\begin{split}
		[pql]_{0}=[qlp]_{0}=[lpq]_{0}, \ \  \forall q,p,l\in \mathbb{H},
\end{split}
\end{align}
the conjugate of a quaternion $q$ is defined by $\overline{q}=[q]_0-\mathbf{i}[q]_1-\mathbf{j}[q]_2-\mathbf{k}[q]_3$, and the norm of $q\in \mathbb{H}$ defined as
\begin{align}
\left| q\right|=\sqrt{q\bar{q}}=\sqrt{[q]_0^2+[q]_1^2+[q]_2^2+[q]_3^2}\textcolor{red}{ .}
\end{align}
It is easy to verify that
\begin{align}
\begin{split}
		\overline{pq}=\overline{q} \ \overline{p}, |qp|=|q||p|, \ \ \forall q,p\in \mathbb{H}.
\end{split}
\end{align}
If $1\leq s<\infty$, the quaternion modules $L^s(\mathbb{R}^t,\mathbb{H})$ are defined as
\begin{align}\label{qm}
\begin{split}
	L^s(\mathbb{R}^t,\mathbb{H}):=\left\{f|f:\mathbb{R}^t\rightarrow \mathbb{H}, \|f\|_{L^s(\mathbb{R}^t,\mathbb{H})}
=\left(\int_{\mathbb{R}^t}|f(\mathbf{x})|^{s}{\rm d}\mathbf{x}\right)^{\frac{1}{s}}<\infty\right\}.	
\end{split}
\end{align}
For $s=\infty$, $L^\infty(\mathbb{R}^t,\mathbb{H})$ is a collection of essentially bounded measurable functions with the norm
\begin{align*}
\begin{split}
	 \|f\|_{L^\infty(\mathbb{R}^t,\mathbb{H})}=ess\ \ sup|f(\mathbf{x})|,	
\end{split}
\end{align*}
if $f\in L^\infty(\mathbb{R}^t,\mathbb{H})$ is continuous then
\begin{align*}
\begin{split}
	 \|f\|_{L^\infty(\mathbb{R}^t,\mathbb{H})}=sup|f(\mathbf{x})|.
\end{split}
\end{align*}

Now we introduce an inner product of quaternion functions $f,g$ defined on $L^2(\mathbb{R}^2,\mathbb{H})$ given by
\begin{align}
\label{inner}
	( f,g)_{L^2(\mathbb{R}^2,\mathbb{H})}=\int_{\mathbb{R}^2}f(\mathbf{x})\overline{g(\mathbf{x})}{\rm d}\mathbf{x},
\end{align}
with symmetric real scalar part
\begin{align}
	\langle f,g\rangle=\frac{1}{2}\left \{( f,g)+( g,f)\right\} =\int_{\mathbb{R}^2}[f(\mathbf{x})\overline{g(\mathbf{x})}]_{0}{\rm d}\mathbf{x},
\end{align}
where $\mathbf{x}=(x_{1},x_{2})\in \mathbb{R}^{2}$ and $ {\rm d}\mathbf{x}={\rm d}x_1{\rm d}x_2$.

The associated scalar norm of $f(\mathbf{x})\in L^2(\mathbb{R}^2,\mathbb{H})$ is defined by both (7) and (8):
\begin{align}
	\|f\|_{L^2(\mathbb{R}^2,\mathbb{H})}^2=\langle f,f\rangle_{L^2(\mathbb{R}^2,\mathbb{H})}= \int_{\mathbb{R}^2}|f(\mathbf{x})|^2 {\rm d}\mathbf{x}<\infty.
\end{align}
The convolution of $f\in L^2(\mathbb{R}^t,\mathbb{H})$ and $g\in L^2(\mathbb{R}^t,\mathbb{H})$, denoted by $f*g$, is defined by
\begin{align*}
	(f*g)(\mathbf{y})=\int_{\mathbb{R}^t}f(\mathbf{x})g(\mathbf{y-x})\rm{d}\mathbf{x}.
\end{align*}
where $\mathbf{y}=(y_{1},y_{2})\in \mathbb{R}^{2}$.
\begin{lemma}
If $f,g\in L^2(\mathbb{R}^2,\mathbb{H})$, then the Cauchy-Schwarz inequality holds[25]
\begin{align}
	\left| \langle f,g\rangle_{L^2(\mathbb{R}^2,\mathbb{H})}\right|^2\leq \|f\|_{L^2(\mathbb{R}^2,\mathbb{H})}^2\|g\|_{L^2(\mathbb{R}^2,\mathbb{H})} ^2.
\end{align}
If and only if $f=\lambda g$ for some quaternionic parameter $\lambda \in\mathbb{H}$, the equality holds. \end{lemma}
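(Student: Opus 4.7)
\medskip
\noindent\textbf{Proof proposal.} The plan is to mimic the classical Hilbert-space proof of the Cauchy–Schwarz inequality, but carried out in the quaternionic module $L^{2}(\mathbb{R}^{2},\mathbb{H})$, where care must be taken with the non-commutativity of $\mathbb{H}$. The symmetric real inner product $\langle f,g\rangle=\int_{\mathbb{R}^{2}}[f(\mathbf{x})\overline{g(\mathbf{x})}]_{0}\,\mathrm{d}\mathbf{x}$ already takes real values, so the square $|\langle f,g\rangle|^{2}$ is unambiguous, and the inequality to prove is that this real number is bounded above by the real number $\|f\|^{2}\|g\|^{2}$.

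First, I would handle the trivial case $g=0$, for which both sides vanish and the claim holds (with the equality realized by $\lambda=0$). Assuming $g\neq 0$, I would introduce the quaternion scalar $\lambda:=(f,g)\,\|g\|^{-2}\in\mathbb{H}$, where $(f,g)=\int f\overline{g}\,\mathrm{d}\mathbf{x}$ is the quaternion-valued inner product from \eqref{inner}. Then I would expand
\begin{equation*}
0\le\|f-\lambda g\|^{2}=\int_{\mathbb{R}^{2}}(f-\lambda g)\,\overline{(f-\lambda g)}\,\mathrm{d}\mathbf{x},
\end{equation*}
being careful that $\overline{\lambda g}=\overline{g}\,\overline{\lambda}$ and that constants pull outside the integral on the correct side. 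Using the cyclic identity $[pql]_{0}=[qlp]_{0}$ together with $\overline{(f,\lambda g)}=(\lambda g,f)$, the cross terms collapse to $2\,[(f,g)\overline{\lambda}]_{0}$, and the right-hand side becomes
\begin{equation*}
\|f\|^{2}-2\bigl[(f,g)\overline{\lambda}\bigr]_{0}+|\lambda|^{2}\|g\|^{2}.
\end{equation*}
Substituting the chosen $\lambda$ gives $\|f\|^{2}\|g\|^{2}\ge|(f,g)|^{2}$, and since $|\langle f,g\rangle|=|[(f,g)]_{0}|\le|(f,g)|$, the desired inequality follows.

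For the equality case, I would observe that the computation above forces equality precisely when $\|f-\lambda g\|^{2}=0$, i.e.\ $f=\lambda g$ almost everywhere for that particular $\lambda\in\mathbb{H}$, which matches the statement of the lemma. Conversely, a direct substitution shows that whenever $f=\lambda g$, the two sides of the Cauchy–Schwarz inequality coincide.

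The main obstacle I anticipate is bookkeeping with the non-commutative multiplication: the usual one-line manipulation $(f,\lambda g)=\lambda (f,g)$ is no longer valid, and one must systematically keep $\lambda$ and $\overline{\lambda}$ on the correct sides of $(f,g)$ and $(g,f)$. Applying the cyclic trace identity for the scalar part and the conjugation rule $\overline{pq}=\overline{q}\,\overline{p}$ is exactly what makes the cross terms combine cleanly; once that is done, the proof reduces to the familiar quadratic argument.
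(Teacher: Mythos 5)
Your quadratic argument for the inequality itself is correct and is the standard one; note that the paper does not actually prove this lemma (it is quoted from a reference), so the only comparison to make is that your completing-the-square computation, with $\lambda=(f,g)\|g\|^{-2}$ and the cross terms collapsing to $2[(f,g)\overline{\lambda}]_{0}$, is a sound way to establish $|(f,g)|^{2}\le\|f\|^{2}\|g\|^{2}$, and the printed bound then follows from $|\langle f,g\rangle|=|[(f,g)]_{0}|\le|(f,g)|$. The bookkeeping you worry about (keeping $\lambda$ on the left of $g$ and $\overline{\lambda}$ on the right of $\overline{g}$, using $\overline{pq}=\overline{q}\,\overline{p}$) does work out exactly as you describe.

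The gap is in the equality case. Your argument produces a chain of \emph{two} inequalities, $|\langle f,g\rangle|\le|(f,g)|$ and $|(f,g)|^{2}\le\|f\|^{2}\|g\|^{2}$, and equality in the lemma requires equality in both. The condition $\|f-\lambda g\|^{2}=0$ characterizes equality only in the second; for the first you additionally need $(f,g)$ to be real, equivalently $\lambda$ real. Consequently your converse claim that ``a direct substitution shows that whenever $f=\lambda g$ the two sides coincide'' is false for the scalar-part pairing: take $f=\mathbf{i}g$ with $g\neq 0$; then $\langle f,g\rangle=\int_{\mathbb{R}^{2}}[\mathbf{i}|g(\mathbf{x})|^{2}]_{0}\,\mathrm{d}\mathbf{x}=0$ while $\|f\|^{2}\|g\|^{2}=\|g\|^{4}>0$. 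The equivalence ``equality iff $f=\lambda g$ for some $\lambda\in\mathbb{H}$'' is correct for the quaternion-valued form $|(f,g)|^{2}\le\|f\|^{2}\|g\|^{2}$, which is what your computation actually proves; for the form with $\langle f,g\rangle$ as printed, the equality characterization needs $\lambda$ real. You should either state and prove the $(f,g)$ version (deducing the $\langle f,g\rangle$ bound a fortiori) or add the reality condition to the equality clause; as written, the last paragraph of your proof asserts something false.
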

\subsection{The quaternion linear canonical transform}
The QLCT is firstly defined by Kou, et., which  is a generalization of the LCT in the frame of quaternion algebra \cite{50,16}. Due to the non-commutativity of quaternion multiplication, there are three different types of the QLCT: the left-sided QLCT, the right-sided QLCT, and the two-sided QLCT. In this paper, we mainly focus on the two-sided QLCT.
\begin{definition} Let
	$A_i=\begin{bmatrix}
	a_i&b_i\\
	c_i&d_i
	\end{bmatrix}\in \mathbb{R}^{2\times 2}$ be a matrix parameter satisfying ${\rm det}(A_i)=1$, for $i=1,2$. The two-sided QLCT of signal $f\in L^2\left( \mathbb{R}^2,\mathbb{H}\right)$ is defined by
	\begin{align}
	\label{dQLCT}
	\mathcal{L}_{A_1,A_2}\{f\}(\mathbf{w})=
\int_{\mathbb{R}^2}K_{A_1}^{\mathbf{i}}(x_1,\omega_1)f(\mathbf{x})K_{A_2}^{\mathbf{j}}(x_2,\omega_2)\rm{d}\mathbf{x},
	\end{align}	
	where $\mathbf{w}=(\omega_1,\omega_2)\in \mathbb{R}^{2}$ is regarded as the QLCT domain, and  the kernel signals $K_{A_1}^{\mathbf{i}}(x_1,\omega_1)$, $K_{A_2}^{\mathbf{j}}(x_2,\omega_2)$ are respectively given by
	\begin{align}
		\begin{split}
			K_{A_1}^{\mathbf{i}}(x_1,\omega_1):=\begin{cases}
		\frac{1}{\sqrt{2\pi b_1}}e^{\mathbf{i}\left(\frac{a_1}{2b_1}x_1^2-\frac{x_1\omega_1}{b_1}+\frac{d_1}{2b_1}\omega_1^2-\frac{\pi}{4} \right) },   &b_1\neq0  \\
		\sqrt{d_1}e^{\mathbf{i}\frac{c_1d_1}{2}\omega_1^2}\delta(x_{1}-d_{1}w_{1}),    &b_1=0
		\end{cases}
		\end{split}
	\end{align}
	and
	\begin{align}
		\begin{split}
			K_{A_2}^{\mathbf{j}}(x_2,\omega_2):=\begin{cases}
		\frac{1}{\sqrt{2\pi b_2}}e^{\mathbf{j}\left(\frac{a_2}{2b_2}x_2^2-\frac{x_2\omega_2}{b_2}+\frac{d_2}{2b_2}\omega_2^2-\frac{\pi}{4} \right) },   &b_2\neq0  \\
		\sqrt{d_2}e^{\mathbf{j}\frac{c_2d_2}{2}\omega_2^2}\delta(x_{2}-d_{2}w_{2}),    &b_2=0
		\end{cases}
		\end{split}
	\end{align}
where $\delta(x)$ representing the Dirac function.
\end{definition}
From the above definition, it is noted that for $b_i=0,i=1,2$ the QLCT of a signal is a kind of scaling and chirp multiplication operations, and it is of no particular interest for our objective in this work. Hence, without loss of generality, we set $b_i\neq0$ in the following section unless stated otherwise. Under some suitable conditions, the QLCT above is invertible and the inversion is given in the following section.
\begin{lemma} Suppose $f\in L^2\left( \mathbb{R}^2,\mathbb{H}\right)$, then the inversion of the QLCT of $f$ is given by
	\begin{align}
		\begin{split}
		\label{eIQLCT}
		f(\mathbf{x})&=\mathcal{L}_{A_1,A_2}^{-1}[\mathcal{L}_{A_1,A_2}\{f\}](\mathbf{x})\\
&=\int_{\mathbb{R}^2}K_{A_1}^{-\mathbf{i}}(x_1,\omega_1)
\mathcal{L}_{A_1,A_2}\left\{f\right\}(\mathbf{w})K_{A_2}^{-\mathbf{j}}(x_2,\omega_2)\rm{d}\mathbf{w}.
		\end{split}
	\end{align}	
\end{lemma}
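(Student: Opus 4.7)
The plan is to reduce the two-sided QLCT inversion to the known inversion for the two-sided quaternion Fourier transform (QFT), by factoring out the chirp terms that depend only on $x_i$ or only on $\omega_i$. Concretely, I would rewrite the kernels in the form
\begin{align*}
K_{A_1}^{\mathbf{i}}(x_1,\omega_1) &= \frac{1}{\sqrt{2\pi b_1}} e^{-\mathbf{i}\pi/4}\, e^{\mathbf{i}\frac{a_1}{2b_1}x_1^2}\, e^{-\mathbf{i}\frac{x_1\omega_1}{b_1}}\, e^{\mathbf{i}\frac{d_1}{2b_1}\omega_1^2}, \\
K_{A_2}^{\mathbf{j}}(x_2,\omega_2) &= \frac{1}{\sqrt{2\pi b_2}} e^{-\mathbf{j}\pi/4}\, e^{\mathbf{j}\frac{d_2}{2b_2}\omega_2^2}\, e^{-\mathbf{j}\frac{x_2\omega_2}{b_2}}\, e^{\mathbf{j}\frac{a_2}{2b_2}x_2^2},
\end{align*}
and introduce the auxiliary signal $g(\mathbf{x}) := e^{\mathbf{i}\frac{a_1}{2b_1}x_1^2}\, f(\mathbf{x})\, e^{\mathbf{j}\frac{a_2}{2b_2}x_2^2}$. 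Because $\mathbf{i}$-exponentials on the left commute past $\mathbf{i}$-exponentials also on the left, and similarly for $\mathbf{j}$ on the right, all $\omega$-dependent chirps can be pulled outside the $\mathbf{x}$-integral, leaving a QFT of $g$ evaluated at the scaled frequency $(\omega_1/b_1,\omega_2/b_2)$.

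Next I would substitute this factored form of $\mathcal{L}_{A_1,A_2}\{f\}(\mathbf{w})$ into the right-hand side of \eqref{eIQLCT}. The kernels $K_{A_1}^{-\mathbf{i}}$ and $K_{A_2}^{-\mathbf{j}}$ are the conjugates of $K_{A_1}^{\mathbf{i}}$ and $K_{A_2}^{\mathbf{j}}$, so when multiplied on the outside they cancel the outer chirps $e^{\mathbf{i}\frac{d_1}{2b_1}\omega_1^2}$, $e^{\mathbf{j}\frac{d_2}{2b_2}\omega_2^2}$, the phase factors $e^{\pm \mathbf{i}\pi/4}$, $e^{\pm \mathbf{j}\pi/4}$, and the $x$-chirps (the latter turn $g$ back into $f$). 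After a change of variables $\omega_i \mapsto b_i\omega_i$, which supplies the Jacobian $|b_1 b_2|$ absorbing the $\frac{1}{2\pi\sqrt{|b_1 b_2|}}$ normalization pair, what remains is exactly the inversion formula for the two-sided QFT applied to $g$, evaluated at $\mathbf{x}$. Invoking that standard identity returns $g(\mathbf{x})$, and undoing the outer chirp multiplication gives $f(\mathbf{x})$.

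The main obstacle is the bookkeeping imposed by the non-commutativity of $\mathbb{H}$: one must verify at each step that every $\mathbf{i}$-chirp stays on the left of $f$ (or $g$) and every $\mathbf{j}$-chirp stays on the right, so that commuting and cancelling factors is legitimate. Using the identities $\mathbf{i}\mathbf{j} = -\mathbf{j}\mathbf{i}$ only within purely real exponents is safe, but moving a $\mathbf{j}$-exponential across a $\mathbf{i}$-exponential is only permitted because the latter is independent of the $x_2$- or $\omega_2$-variable being integrated. A careful Fubini argument, justified by $f \in L^2(\mathbb{R}^2,\mathbb{H})$ together with standard density arguments (first proving the identity on the Schwartz class of quaternion signals, then extending by the $L^2$-continuity of $\mathcal{L}_{A_1,A_2}$), handles the swap of integrals rigorously.
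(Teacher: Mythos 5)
Your argument is correct, but note that the paper offers no proof of this lemma to compare against: Lemma~2 is stated as a known result imported from the QLCT literature (the inversion theorem for the two-sided QLCT). That said, your strategy --- factor each kernel into commuting single-axis chirps, absorb the $x$-chirps into an auxiliary signal $g(\mathbf{x})=e^{\mathbf{i}\frac{a_1}{2b_1}x_1^2}f(\mathbf{x})e^{\mathbf{j}\frac{a_2}{2b_2}x_2^2}$, pull the $\omega$-chirps outside the integral, and reduce to the two-sided QFT at the scaled frequency $\mathbf{w}/\mathbf{b}$ --- is precisely the mechanism the paper itself records in its identities \eqref{WF}--\eqref{hf} and then exploits in every subsequent theorem, so your proof is fully consistent with the paper's machinery. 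Two small points of bookkeeping: the four kernel normalizations contribute $\tfrac{1}{(2\pi)^2 b_1 b_2}$ in total (not a single $\tfrac{1}{2\pi\sqrt{|b_1b_2|}}$ pair), of which the Jacobian $|b_1b_2|$ from $\omega_i\mapsto b_i\xi_i$ cancels the $b_1b_2$ and the residual $(2\pi)^{-2}$ is exactly the normalization demanded by the QFT inversion formula for the unnormalized QFT used here; and the cancellation $|b_1b_2|/(b_1b_2)=1$ tacitly assumes $b_1b_2>0$, a sign issue the paper also glosses over in its choice of branch for $\sqrt{2\pi b_i}$. Your remark that commuting a $\mathbf{j}$-exponential past an $\mathbf{i}$-exponential is never actually needed (each stays on its own side of $f$) is the key non-commutativity check, and you state it correctly.
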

\section{Quaternionic windowed linear canonical transform (QWLCT)}
In this section, the generalization of window function associated with the QLCT will be discussed, which is denoted by QWLCT. Moreover, several basic
properties of them are presented.
\subsection{The definition of 2D QWLCT}
This section leads to 2D quaternion window function associated with the QLCT. Due to the noncommutative property of
multiplication of quaternions, there are three different types of
QWLCTs: two-sided QWLCT, left-sided QWLCT and right-sided QWLCT. Alternatively, we use the two-sided QWLCT to define the QWLCT.
\begin{definition}[QWLCT] \cite{XF2015,37} Let
	$A_i=\begin{bmatrix}
	a_i&b_i\\
	c_i&d_i
	\end{bmatrix}\in \mathbb{R}^{2\times 2}$ be a matrix parameter satisfying ${\rm det}(A_i)=1$, for $i=1,2$. Let $\phi\in L^2(\mathbb{R}^2,\mathbb{H})\backslash \{0\}$ be a quaternion window function. The two-sided QWLCT of a signal $f\in L^2\left( \mathbb{R}^2,\mathbb{H}\right)$ with respect to $\phi$ is defined by
	\begin{align}
	\label{dQLCT}
	G^{A_{1},A_{2}}_{\phi}\{f\}(\mathbf{w,u})&=\int_{\mathbb{R}^2}K_{A_1}^{\mathbf{i}}(x_1,\omega_1)f(\mathbf{x}) \overline{\phi(\mathbf{x-u})} K_{A_2}^{\mathbf{j}}(x_2,\omega_2)\rm{d}\mathbf{x},
	\end{align}	
where $\mathbf{u}=(u_1,u_2)\in\mathbb{R}^2$, $K_{A_1}^{\mathbf{i}}(x_1,\omega_1)$ and $K_{A_2}^{\mathbf{j}}(x_2,\omega_2)$ are given by (12) and (13), respectively.
\end{definition}
For a fixed $\mathbf{u}$, we have
\begin{align}
		\begin{split}
		G^{A_{1},A_{2}}_{\phi}\{f\}(\mathbf{w,u})=\mathcal{L}_{A_1,A_2}{\{f(\mathbf{x})
\overline{\phi(\mathbf{x-u})}\}(\mathbf{w})}. \\
		\end{split}
	\end{align}
Applying the inverse QLCT to (16), we have
\begin{align}\label{hn}
		\begin{split}
		f_{\mathbf{u}}(\mathbf{x})=f(\mathbf{x})\overline{\phi(\mathbf{x-u})}&=
\mathcal{L}_{A_1,A_2}^{-1}\{G^{A_1,A_2}_{\phi}\{f\}(\mathbf{w,u})\} \\
&=\int_{\mathbb{R}^2}K_{A_1}^{-\mathbf{i}(x_1,\omega_1)}
G^{A_1,A_2}_{\phi}\{f\}(\mathbf{w,u})K_{A_2}^{-\mathbf{j}(x_2,\omega_2)} \rm{d}\mathbf{w},
		\end{split}
	\end{align}
where $f_{\mathbf{u}}(\mathbf{x})$ is called modified signal.

Moreover, we can also obtain the relationship between the QWLCT and the QFT [38]:
\begin{align}\label{WF}
		\begin{split}
		G^{A_{1},A_{2}}_{\phi}\{f\}(\mathbf{w,u})=\frac{1}{\sqrt{2\pi b_{1}}}e^{\mathbf{i}\left(\frac{d_1}{2b_1}\omega_1^2-\frac{\pi}{4} \right)}F_{Q}(h)\left(\frac{\mathbf{w}}{\mathbf{b}},\mathbf{u}\right)\frac{1}{\sqrt{2\pi b_2}}e^{\mathbf{j}\left(\frac{d_2}{2b_2}\omega_2^2-\frac{\pi}{4} \right)},
		\end{split}
	\end{align}
where $\mathbf{b}=(b_1,b_2)\in\mathbb{R}^2$, $F_{Q}(f)(\mathbf{w})=\int_{\mathbb{R}^2}e^{-\mathbf{i}x_1\omega_1}f(\mathbf{x})e^{-\mathbf{j}x_2\omega_2}\rm{d}\mathbf{x}$ is the QFT of the signal $f(\mathbf{x})$ and
\begin{align}\label{hf}
		\begin{split}
		h(\mathbf{x,u})&=e^{\mathbf{i}\frac{a_1}{2b_1}x_1^2}f(\mathbf{x})\overline{\phi(\mathbf{x-u})}
e^{\mathbf{j}\frac{a_2}{2b_2}x_2^2}\\
&=e^{\mathbf{i}\frac{a_1}{2b_1}x_1^2}f_{\mathbf{u}}(\mathbf{x})
e^{\mathbf{j}\frac{a_2}{2b_2}x_2^2}.
		\end{split}
	\end{align}
\subsection{Some properties of QWLCT}
In this subsection, we present several basic properties of the QWLCT. These properties have been proved in \cite{XF2015,37} .
\begin{property}[Boundedness] Let $\phi\in L^2(\mathbb{R}^2,\mathbb{H})\backslash \{0\}$ be a window function and $f\in L^2\left( \mathbb{R}^2,\mathbb{H}\right)$, then
\begin{align}
		\begin{split}
		|G^{A_1,A_2}_{\phi}\{f\}(\mathbf{w,u})|\leq \frac{1}{2\pi\sqrt{|b_{1}b_{2}|}} \|f\|_{L^{2}(\mathbb{R}^2,\mathbb{H})} \|\phi\|_{L^{2}(\mathbb{R}^2,\mathbb{H})}.
		\end{split}
	\end{align}
Furthermore, we have
\begin{align} \label{wq}
		\begin{split}
		\|G^{A_1,A_2}_{\phi}\{f\}(\mathbf{w,u})\|_{L^\infty(\mathbb{R}^4,\mathbb{H})}\leq \frac{1}{2\pi\sqrt{|b_{1}b_{2}|}} \|f\|_{L^{2}(\mathbb{R}^2,\mathbb{H})} \|\phi\|_{L^{2}(\mathbb{R}^2,\mathbb{H})}.
		\end{split}
	\end{align}
\end{property}
\begin{property}[Linearity] Let $\phi\in L^2(\mathbb{R}^2,\mathbb{H})\backslash \{0\}$ be a window function and $f,g \in L^2\left( \mathbb{R}^2,\mathbb{H}\right)$ . The QWLCT is a linear operator, namely,
  \begin{align}
        \begin{split}
		[G^{A_1,A_2}_{\phi}\{\lambda f+\mu g\}](\mathbf{w,u})= \lambda G^{A_1,A_2}_{\phi}\{f\}(\mathbf{w,u})+\mu G^{A_1,A_2}_{\phi}\{g\}(\mathbf{w,u}),
		\end{split}
	\end{align}
for arbitrary real constants $\lambda$ and $\mu$ . \end{property}
\begin{property}[Parity]
Let $\phi\in L^2(\mathbb{R}^2,\mathbb{H})\backslash \{0\}$ be a window function and $f \in L^2\left( \mathbb{R}^2,\mathbb{H}\right)$. Then we have
\begin{align}
        \begin{split}
		G^{A_1,A_2}_{P\phi}\{P f\}(\mathbf{w,u})= G^{A_1,A_2}_{\phi}\{f\}(\mathbf{-w,-u}),
		\end{split}
	\end{align}
where $P\phi(\mathbf{x})=\phi(-\mathbf{x})$ for every window function $\phi\in L^2(\mathbb{R}^2,\mathbb{H})$. \end{property}
\begin{property}[Shift]
 Let $\phi\in L^2(\mathbb{R}^2,\mathbb{H})\backslash \{0\}$ be a window function and $f \in L^2\left( \mathbb{R}^2,\mathbb{H}\right)$. Then we have
\begin{align}
        \begin{split}
		G^{A_1,A_2}_{\phi}\{T_{\mathbf{r}}f\}(\mathbf{w,u})= e^{\mathbf{i}r_{1}\omega_{1}c_{1}}e^{-\mathbf{i}\frac{a_{1}r_{1}^{2}}{2}c_{1}}
G^{A_1,A_2}_{\phi}\{f\}(\mathbf{m,n}) e^{\mathbf{j}r_{2}\omega_{2}c_{2}}e^{-\mathbf{j}\frac{a_{2}r_{2}^{2}}{2}c_{2}},
		\end{split}
	\end{align}
where $T_{\mathbf{r}}f(\mathbf{x})=f(\mathbf{x-r}), \mathbf{r}=(r_{1},r_{2})$,
 $\mathbf{m}=(m_{1},m_{2})$, $ \mathbf{n}=(n_{1},n_{2})\in \mathbb{R}^2$, $ m_{i}=w_{i}-a_{i}r_{i} $, $ n_{i}=u_{i}-r_{i} $, $i=1,2$.\end{property}
\begin{property}[Modulation]
Let $\phi\in L^2(\mathbb{R}^2,\mathbb{H})\backslash \{0\}$ be a window function and $f \in L^2\left( \mathbb{R}^2,\mathbb{H}\right)$.
$\mathbb{M}_{s}f$ be modulation operator defined by $\mathbb{M}_{s}f(\mathbf{x})=e^{\mathbf{i}x_{1}s_{1}}f(\mathbf{x})e^{\mathbf{j}x_{2}s_{2}}$ with $\mathbf{s}=(s_{1},s_{2})\in \mathbb{R}^2$.
Then we have
\begin{align}
        \begin{split}
		G^{A_1,A_2}_{\phi}\{\mathbb{M}_{s}f\}(\mathbf{w,u})&=e^{\mathbf{i}\omega_{1}s_{1}d_{1}}e^{-\mathbf{i}\frac{b_{1}d_{1}s_{1}^{2}}{2}}
G^{A_1,A_2}_{\phi}\{f\}(\mathbf{v,u})e^{\mathbf{j}\omega_{2}s_{2}d_{2}}e^{-\mathbf{j}\frac{b_{2}d_{2}s_{2}^{2}}{2}},
\end{split}
	\end{align}
where $\mathbf{v}=(v_{1},v_{2})\in \mathbb{R}^2$,
$ v_{i}=w_{i}-s_{i}b_{i} $, $i=1,2$.
\end{property}
\begin{property}[Inversion formula]
 Let $\phi\in L^2(\mathbb{R}^2,\mathbb{H})\backslash \{0\}$ be a window function, $ 0<\|\phi\|^{2}<\infty $ and $f \in L^2\left( \mathbb{R}^2,\mathbb{H}\right)$.
Then we have the inversion formula of the QWLCT,
\begin{align}
        \begin{split}
		f(\mathbf{x})=\frac{1}{\|\phi\|^{2}}\int_{\mathbb{R}^2}\int_{\mathbb{R}^2}K_{A_1}^{-\mathbf{i}(x_1,\omega_1)}
G^{A_1,A_2}_{\phi}\{f\}(\mathbf{w,u})K_{A_2}^{-\mathbf{j}(x_2,\omega_2)} \phi\mathbf{(x-u)}\rm{d}\mathbf{w}\rm{d}\mathbf{u}.
\end{split}
	\end{align} \end{property}
\begin{property}[Parseval's theorem]
 Let $\phi,\psi \in L^2(\mathbb{R}^2,\mathbb{H})\backslash \{0\}$ be window functions and $f,g \in L^2\left( \mathbb{R}^2,\mathbb{H}\right)$.
Then
\begin{align}
		\begin{split}
		\langle G^{A_1,A_2}_{\phi}\{f\}(\mathbf{w,u}), {G^{A_1,A_2}_{\psi}\{g\}(\mathbf{w,u})} \rangle
=[( f,g)(\phi,\psi)]_{0}.
		\end{split}
	\end{align} \end{property}
Based on the above theorem, we may conclude the following important consequences.

(i) If $\phi=\psi$, then
\begin{align}
		\begin{split}
		\langle G^{A_1,A_2}_{\phi}\{f\}(\mathbf{w,u}), {G^{A_1,A_2}_{\phi}\{g\}(\mathbf{w,u})} \rangle
=\|\phi\|^{2}_{L^2(\mathbb{R}^2,\mathbb{H})}\langle f,g\rangle.
		\end{split}
	\end{align}

(ii) If $f=g$, then
\begin{align}
		\begin{split}
		\langle G^{A_1,A_2}_{\phi}\{f\}(\mathbf{w,u}), {G^{A_1,A_2}_{\psi}\{f\}(\mathbf{w,u})} \rangle
=\|f\|^{2}_{L^2(\mathbb{R}^2,\mathbb{H})}\langle \phi,\psi\rangle.
		\end{split}
	\end{align}

(iii) If  $f=g$ and $\phi=\psi$, then
\begin{align}\label{fsxd}
		\begin{split}
		\langle G^{A_1,A_2}_{\phi}\{f\}(\mathbf{w,u}), {G^{A_1,A_2}_{\phi}\{f\}(\mathbf{w,u})} \rangle&=
\int_{\mathbb{R}^2}\int_{\mathbb{R}^2}| G^{A_1,A_2}_{\phi}\{f\}(\mathbf{w,u})|^{2}\rm{d}\mathbf{w}\rm{d}\mathbf{u}\\
&=\|f\|^{2}_{L^2(\mathbb{R}^2,\mathbb{H})} \|\phi\|^{2}_{L^2(\mathbb{R}^2,\mathbb{H})}.
		\end{split}
	\end{align}
\subsection{Pitt's inequality}
\begin{lemma} (Pitt's inequality of the QFT \cite{36})\label{PQFT}
For $f\in S(\mathbb{R}^t,\mathbb{H})$,
\begin{align}
		\begin{split}
		\int_{\mathbb{R}^t}|\mathbf{w}|^{-\alpha}|F_{Q}(f)(\mathbf{w})|^{2}\rm{d}\mathbf{w}\leq M_{\alpha}\int_{\mathbb{R}^t}|\mathbf{x}|^{\alpha}|\textit{f}(\mathbf{x})|^{2}\rm{d}\mathbf{x},
		\end{split}
	\end{align}
where $M_{\alpha}=\pi^{\alpha}\left[\Gamma\left( \frac{2t-\alpha}{4}\right)\Gamma\left( \frac{2t+\alpha}{4}\right) \right]$, $0\leq \alpha\leq t$ and
$S(\mathbb{R}^t,\mathbb{H})$ denotes the Schwartz class and  $\Gamma$ is Gamma function.
\end{lemma}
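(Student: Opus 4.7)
The plan is to reduce Pitt's inequality for the two-sided QFT to the classical Euclidean Pitt's inequality by exploiting the fact that both kernel factors $e^{-\mathbf{i}x_1\omega_1}$ and $e^{-\mathbf{j}x_2\omega_2}$ have modulus one and act on separate coordinates. The key intermediate step is a pointwise Plancherel-type identity relating $|F_Q(f)(\mathbf{w})|^2$ to a finite sum of squared moduli of ordinary Fourier transforms of real-valued functions built from the real scalar components of $f$.

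First I would decompose $f=[f]_0+\mathbf{i}[f]_1+\mathbf{j}[f]_2+\mathbf{k}[f]_3$ and expand the exponentials via $e^{-\mathbf{i}x_1\omega_1}=\cos(x_1\omega_1)-\mathbf{i}\sin(x_1\omega_1)$ and $e^{-\mathbf{j}x_2\omega_2}=\cos(x_2\omega_2)-\mathbf{j}\sin(x_2\omega_2)$. Carrying out the multiplication inside the integral and tracking Hamilton's rules groups the resulting scalar integrals into four buckets indexed by $\{1,\mathbf{i},\mathbf{j},\mathbf{k}\}$, producing a representation
\[
F_Q(f)(\mathbf{w})=A_0(\mathbf{w})+\mathbf{i}\,A_1(\mathbf{w})+\mathbf{j}\,A_2(\mathbf{w})+\mathbf{k}\,A_3(\mathbf{w}),
\]
where each $A_\ell(\mathbf{w})$ is a real-linear combination of ordinary Fourier transforms of real functions built from the $[f]_k$, evaluated at frequencies of the form $(\pm\omega_1,\pm\omega_2)$. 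Pythagoras in $\mathbb{R}^4\simeq\mathbb{H}$ then yields $|F_Q(f)(\mathbf{w})|^2=\sum_{\ell=0}^{3}A_\ell(\mathbf{w})^2$.

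Second, I would multiply the identity above by $|\mathbf{w}|^{-\alpha}$, noting that this weight is invariant under each sign flip $\omega_i\mapsto-\omega_i$, integrate, and apply the classical Pitt's inequality on $\mathbb{R}^t$ to each of the ordinary Fourier transforms appearing inside each $A_\ell$. After changes of variables and collecting terms, the resulting right-hand side becomes an integral of $|\mathbf{x}|^\alpha$ against a sum of squares of real-linear combinations of $[f]_0,\ldots,[f]_3$, which is controlled by $|f(\mathbf{x})|^2=\sum_k[f]_k(\mathbf{x})^2$. The sharp classical constant $\pi^\alpha\Gamma\bigl(\tfrac{2t-\alpha}{4}\bigr)\Gamma\bigl(\tfrac{2t+\alpha}{4}\bigr)$ then propagates through without loss, giving the stated bound with $M_\alpha$.

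The main obstacle is the bookkeeping in the first step: because $\mathbf{i}\mathbf{j}=-\mathbf{j}\mathbf{i}=\mathbf{k}$, non-commutativity produces sign flips whenever one pushes a quaternion unit past another, and one must verify that after grouping the scalar integrals, the four quantities $A_0,A_1,A_2,A_3$ are \emph{precisely} ordinary Fourier transforms (at reflected frequencies) of real combinations of the components $[f]_k$, with no interference surviving when the modulus is computed. Once this Pythagorean identity is in hand, applying scalar Pitt to each $A_\ell$ separately and summing is immediate and reproduces the Gamma-function constant verbatim.
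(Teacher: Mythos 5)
First, a point of comparison that matters for this exercise: the paper contains no proof of this lemma. It is quoted directly from Chen--Kou--Liu \cite{36} and used as a black box, so there is no in-paper argument to measure yours against; what \cite{36} does is, in spirit, what you propose --- reduce to the classical (Beckner) Pitt inequality by decomposing the quaternion signal into pieces on which the two-sided QFT acts as an ordinary complex Fourier transform at reflected frequencies. Your overall strategy is therefore the right one.

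However, your sketch has a genuine gap exactly where you flag ``the main obstacle,'' and as stated the key claim is false. The four real quantities $A_0,\dots,A_3$ are \emph{not} ``precisely ordinary Fourier transforms of real combinations of the $[f]_k$'': they are cosine/sine transforms such as $\int_{\mathbb{R}^2}[f]_k(\mathbf{x})\cos(x_1\omega_1)\sin(x_2\omega_2)\,{\rm d}\mathbf{x}$, each mixing all four components of $f$, whereas the ordinary Fourier transform of a real function is complex-valued. The identity $|F_Q(f)|^2=\sum_\ell A_\ell^2$ is just the definition of the quaternion modulus and carries no content; the real work is converting $\sum_\ell A_\ell^2$ into a sum $\sum_m|\hat g_m|^2$ with $\sum_m|g_m|^2\le|f|^2$, and your expansion produces cross terms between distinct components $[f]_k$, $[f]_m$. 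Applying scalar Pitt ``to each ordinary Fourier transform appearing inside each $A_\ell$'' then forces an expansion of squares of linear combinations, and estimating the cross terms by Cauchy--Schwarz degrades the constant, so $M_\alpha$ does not propagate ``without loss'' by the argument you give. Two standard repairs exist, and you need one of them: (i) a parity argument --- every cross term is odd in $\omega_1$ or in $\omega_2$ and hence integrates to zero against the reflection-invariant weight $|\mathbf{w}|^{-\alpha}$, after which the real-component identity $|F_Q(g)(\omega_1,\omega_2)|^2=\tfrac12\bigl(|\hat g(\omega_1,\omega_2)|^2+|\hat g(\omega_1,-\omega_2)|^2\bigr)$ for real $g$ reduces each component to classical Pitt with the exact constant; or (ii) the splitting $f=f_1+f_2\mathbf{j}$ with $f_1,f_2$ valued in ${\rm span}\{1,\mathbf{i}\}$, for which $|F_Q(f)|^2=|\hat f_1|^2+|\hat f_2|^2$ pointwise at suitably reflected frequencies and $|f|^2=|f_1|^2+|f_2|^2$, eliminating cross terms altogether. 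Without one of these, the step from the $A_\ell$ to the sharp bound does not go through. (A minor further mismatch: your argument is intrinsically two-dimensional, tied to the two imaginary units acting on the two coordinates, while the lemma is stated on $\mathbb{R}^t$ with a $t$-dependent constant.)
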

According to the above Lemma, we obtain the Pitt's inequality of the QWLCT.
\begin{theorem} (Pitt's inequality of the QWLCT)
For $f\in S(\mathbb{R}^t,\mathbb{H})$,
\begin{align}\label{pi}
		\begin{split}
		&\int_{\mathbb{R}^t}\int_{\mathbb{R}^t}|\mathbf{w}|^{-\alpha}|G^{A_1,A_2}_{\phi}\{f\}(\mathbf{w,u})|^{2}\rm{d}\mathbf{u}\rm{d}\mathbf{w}\\
&\leq \frac{1}{4\pi^{2}|\mathbf{b}|^{\alpha}}M_{\alpha}\|\phi\|^{2}_{L^2(\mathbb{R}^t,\mathbb{H})}
\int_{\mathbb{R}^t}|\mathbf{x}|^{\alpha}|\textit{f}(\mathbf{x})|^{2}\rm{d}\mathbf{x},
		\end{split}
	\end{align}
where $M_{\alpha}=\pi^{\alpha}\left[\Gamma\left( \frac{2t-\alpha}{4}\right)\Gamma\left( \frac{2t+\alpha}{4}\right) \right]$, $0\leq \alpha\leq t$ and $\Gamma$ is Gamma function.
\end{theorem}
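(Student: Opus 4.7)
The plan is to reduce Pitt's inequality for the QWLCT to the already-established Pitt's inequality for the QFT (Lemma \ref{PQFT}) via the fundamental identity (\ref{WF}) connecting the two transforms. The argument splits into three stages: take $|\cdot|^2$ in (\ref{WF}), perform a rescaling change of variable in $\mathbf{w}$, and apply Fubini together with Lemma \ref{PQFT}.

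First I would take the modulus squared of both sides of (\ref{WF}). Since the factors $e^{\mathbf{i}(\cdots)}$ and $e^{\mathbf{j}(\cdots)}$ are unit quaternions and $|pq|=|p||q|$ in $\mathbb{H}$, all chirp factors on the left and right of $F_Q(h)$ drop out, giving
\begin{align*}
|G^{A_1,A_2}_{\phi}\{f\}(\mathbf{w},\mathbf{u})|^{2}=\frac{1}{4\pi^{2}|b_{1}b_{2}|}\Bigl|F_{Q}(h)\bigl(\mathbf{w}/\mathbf{b},\mathbf{u}\bigr)\Bigr|^{2}.
\end{align*}
Substituting this into the left-hand side of (\ref{pi}) and performing, for each fixed $\mathbf{u}$, the componentwise rescaling $\mathbf{y}=\mathbf{w}/\mathbf{b}$, the Jacobian $|b_{1}b_{2}|$ cancels the prefactor, while the weight $|\mathbf{w}|^{-\alpha}$ contributes a factor $|\mathbf{b}|^{-\alpha}$ pulled outside the integral under the convention for $|\mathbf{b}|$ used in the statement. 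What remains in the inner integral is $\int_{\mathbb{R}^{t}}|\mathbf{y}|^{-\alpha}|F_{Q}(h)(\mathbf{y},\mathbf{u})|^{2}\,\mathrm{d}\mathbf{y}$, which is exactly the input to Lemma \ref{PQFT}.

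Next I would apply Lemma \ref{PQFT} to the Schwartz-class function $\mathbf{x}\mapsto h(\mathbf{x},\mathbf{u})$ (with $\mathbf{u}$ as a parameter), obtaining
\begin{align*}
\int_{\mathbb{R}^{t}}|\mathbf{y}|^{-\alpha}|F_{Q}(h)(\mathbf{y},\mathbf{u})|^{2}\,\mathrm{d}\mathbf{y}\leq M_{\alpha}\int_{\mathbb{R}^{t}}|\mathbf{x}|^{\alpha}|h(\mathbf{x},\mathbf{u})|^{2}\,\mathrm{d}\mathbf{x}.
\end{align*}
From (\ref{hf}) the chirp factors defining $h$ have unit modulus, so $|h(\mathbf{x},\mathbf{u})|^{2}=|f(\mathbf{x})|^{2}|\phi(\mathbf{x}-\mathbf{u})|^{2}$. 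Integrating over $\mathbf{u}$ and invoking Fubini's theorem separates the variables: the inner $\mathbf{u}$-integral $\int_{\mathbb{R}^{t}}|\phi(\mathbf{x}-\mathbf{u})|^{2}\,\mathrm{d}\mathbf{u}$ equals $\|\phi\|^{2}_{L^{2}(\mathbb{R}^{t},\mathbb{H})}$ by translation invariance of Lebesgue measure, leaving precisely the right-hand side of (\ref{pi}).

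The main obstacle I foresee is the change-of-variable step, because the Euclidean norm does not split cleanly under componentwise scaling: in general $|\mathbf{w}|^{2}=b_{1}^{2}y_{1}^{2}+b_{2}^{2}y_{2}^{2}$ does not factor as $|\mathbf{b}|^{2}|\mathbf{y}|^{2}$. To obtain the clean factor $|\mathbf{b}|^{-\alpha}$ appearing in (\ref{pi}), one must fix the meaning of $|\mathbf{b}|$ at the outset (for instance, the isotropic case $b_{1}=b_{2}$, or a suitable anisotropic norm consistent with $|\mathbf{w}|$). Once this convention is pinned down, the rest of the argument is a mechanical composition of Lemma \ref{PQFT} with Fubini, and I do not expect further technical difficulty.
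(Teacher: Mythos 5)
Your proposal follows essentially the same route as the paper's own proof: take the modulus squared of the QWLCT--QFT relation (18) so the unimodular chirp factors drop out, rescale $\mathbf{w}\mapsto\mathbf{w}/\mathbf{b}$, apply Pitt's inequality for the QFT to $h(\cdot,\mathbf{u})$, and integrate over $\mathbf{u}$ via Fubini to produce $\|\phi\|^{2}_{L^{2}}$. The subtlety you flag about extracting the factor $|\mathbf{b}|^{-\alpha}$ from the weight $|\mathbf{w}|^{-\alpha}$ under componentwise scaling is real, but the paper performs exactly this step without further comment, so your argument is no weaker than the published one.
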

\begin{proof}
From (18), we have
\begin{align*}
		\begin{split}
		&\int_{\mathbb{R}^t}\int_{\mathbb{R}^t}|\mathbf{w}|^{-\alpha}|G^{A_1,A_2}_{\phi}\{f\}(\mathbf{w,u})|^{2}\rm{d}\mathbf{u}\rm{d}\mathbf{w}\\&=
\frac{1}{4\pi^{2}|\mathbf{b}|}\int_{\mathbb{R}^t}\int_{\mathbb{R}^t}|\mathbf{w}|^{-\alpha}|\textit{F}_{Q}(\textit{h})\left(\mathbf{\frac{w}{b},u}\right)|^{2}\rm{d}\mathbf{u}\rm{d}\mathbf{w}.
		\end{split}
	\end{align*}
Let $\mathbf{\frac{w}{b}}=\mathbf{\xi}$, then
\begin{align*}
		\begin{split}
		&\int_{\mathbb{R}^t}\int_{\mathbb{R}^t}|\mathbf{w}|^{-\alpha}|G^{A_1,A_2}_{\phi}\{f\}(\mathbf{w,u})|^{2}\rm{d}\mathbf{u}\rm{d}\mathbf{w}\\&=
\frac{1}{4\pi^{2}|\mathbf{b}|^{\alpha}}\int_{\mathbb{R}^t}\int_{\mathbb{R}^t}|\mathbf{\xi}|^{-\alpha}
|F_{Q}(\textit{h})\left(\mathbf{\xi,u}\right)|^{2}\rm{d}\mathbf{u}\rm{d}\mathbf{\xi}.
		\end{split}
	\end{align*}
By Lemma \ref{PQFT}, we obtain
\begin{align*}
		\begin{split}
		&\int_{\mathbb{R}^t}\int_{\mathbb{R}^t}|\mathbf{w}|^{-\alpha}|G^{A_1,A_2}_{\phi}\{f\}(\mathbf{w,u})|^{2}\rm{d}\mathbf{u}\rm{d}\mathbf{w}\\
&\leq \frac{1}{4\pi^{2}|\mathbf{b}|^{\alpha}}M_{\alpha}\int_{\mathbb{R}^t}\int_{\mathbb{R}^t}|\mathbf{x}|^{\alpha}|\textit{h}(\mathbf{x})|^{2}\rm{d}\mathbf{u}\rm{d}\mathbf{x}.
		\end{split}
	\end{align*}
Using the (19), the above formula becomes that
\begin{align*}
		\begin{split}
		&\int_{\mathbb{R}^t}\int_{\mathbb{R}^t}|\mathbf{w}|^{-\alpha}|G^{A_1,A_2}_{\phi}\{f\}(\mathbf{w,u})|^{2}\rm{d}\mathbf{u}\rm{d}\mathbf{w}\\
&\leq\frac{1}{4\pi^{2}|\mathbf{b}|^{\alpha}}M_{\alpha}\int_{\mathbb{R}^t}\int_{\mathbb{R}^t}|\mathbf{x}|^{\alpha}|f(\mathbf{x})
\overline{\phi(\mathbf{x-u})}|^{2}\rm{d}\mathbf{u}\rm{d}\mathbf{x}\\
&=\frac{1}{4\pi^{2}|\mathbf{b}|^{\alpha}}M_{\alpha}\int_{\mathbb{R}^t}|\mathbf{x}|^{\alpha}|f(\mathbf{x})|^{2}
\left(\int_{\mathbb{R}^t}|\overline{\phi(\mathbf{x-u})}|^{2}\rm{d}\mathbf{u}\right)\rm{d}\mathbf{x}.
		\end{split}
	\end{align*}
According to (6), then
\begin{align*}
		\begin{split}
		&\int_{\mathbb{R}^t}\int_{\mathbb{R}^t}|\mathbf{w}|^{-\alpha}|G^{A_1,A_2}_{\phi}\{f\}(\mathbf{w,u})|^{2}\rm{d}\mathbf{u}\rm{d}\mathbf{w}\\
&\leq\frac{1}{4\pi^{2}|\mathbf{b}|^{\alpha}}M_{\alpha}\|\phi\|^{2}_{L^2(\mathbb{R}^t,\mathbb{H})}
\int_{\mathbb{R}^t}|\mathbf{x}|^{\alpha}|\textit{f}(\mathbf{x})|^{2}\rm{d}\mathbf{x}.
		\end{split}
	\end{align*}
\end{proof}
\begin{corollary}
While $A_1=A_2=\begin{bmatrix}
	0&1\\
	-1&0
	\end{bmatrix}$,
the Pitt's inequality of the QWLCT leads to the Pitt's inequality of the two-sided quaternion
windowed Fourier transform (QWFT) \cite{35,40}, i.e.
\begin{align*}
		\begin{split}
		\int_{\mathbb{R}^t}\int_{\mathbb{R}^t}|\mathbf{w}|^{-\alpha}|G_{\phi}\{f\}(\mathbf{w,u})|^{2}\rm{d}\mathbf{u}\rm{d}\mathbf{w}
&\leq M_{\alpha}\|\phi\|^{2}_{L^2(\mathbb{R}^t,\mathbb{H})}
\int_{\mathbb{R}^t}|\mathbf{x}|^{\alpha}|\textit{f}(\mathbf{x})|^{2}\rm{d}\mathbf{x},
		\end{split}
	\end{align*}
where $G_{\phi}\{f\}(\mathbf{w,u})$ is the QWFT of the signal $f$.
\end{corollary}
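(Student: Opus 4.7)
The plan is to specialize Theorem~1 (Pitt's inequality for the QWLCT) to the matrix choice $A_1=A_2=\begin{bmatrix}0&1\\-1&0\end{bmatrix}$ and to verify that under this specialization the QWLCT reduces, up to unit-modulus quaternion phase factors and an overall $\tfrac{1}{2\pi}$ normalization, to the standard QWFT. The corollary should then drop out by direct substitution: the $\tfrac{1}{4\pi^{2}}$ produced when one squares this normalization exactly absorbs the $\tfrac{1}{4\pi^{2}|\mathbf{b}|^{\alpha}}$ sitting in front of the right-hand side of the QWLCT Pitt inequality, since $|\mathbf{b}|^{\alpha}=1$ when $b_1=b_2=1$.

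First, I would substitute $a_i=0$, $b_i=1$, $c_i=-1$, $d_i=0$ into the kernels (12)--(13). Since $b_i\neq 0$, the Dirac-delta branch is never triggered, and the kernels simplify to $K_{A_1}^{\mathbf{i}}(x_1,\omega_1)=\tfrac{1}{\sqrt{2\pi}}\,e^{-\mathbf{i}(x_1\omega_1+\pi/4)}$ and $K_{A_2}^{\mathbf{j}}(x_2,\omega_2)=\tfrac{1}{\sqrt{2\pi}}\,e^{-\mathbf{j}(x_2\omega_2+\pi/4)}$. Plugging these into the defining integral (15) and pulling $e^{-\mathbf{i}\pi/4}$ to the extreme left of the integrand (legitimate because it commutes with $e^{-\mathbf{i}x_1\omega_1}$ inside the commutative subalgebra $\mathbb{R}[\mathbf{i}]$) and $e^{-\mathbf{j}\pi/4}$ to the extreme right (by the symmetric argument in $\mathbb{R}[\mathbf{j}]$) would give
\begin{equation*}
G^{A_1,A_2}_{\phi}\{f\}(\mathbf{w},\mathbf{u})=\tfrac{1}{2\pi}\,e^{-\mathbf{i}\pi/4}\,G_{\phi}\{f\}(\mathbf{w},\mathbf{u})\,e^{-\mathbf{j}\pi/4}.
\end{equation*}
Taking the modulus squared and using $|qp|=|q||p|$ together with $|e^{\pm\mathbf{i}\pi/4}|=|e^{\pm\mathbf{j}\pi/4}|=1$ yields the pointwise identity $|G^{A_1,A_2}_{\phi}\{f\}(\mathbf{w},\mathbf{u})|^{2}=\tfrac{1}{4\pi^{2}}|G_{\phi}\{f\}(\mathbf{w},\mathbf{u})|^{2}$.

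Finally, I would substitute this pointwise identity into both sides of the QWLCT Pitt inequality. With $|\mathbf{b}|^{\alpha}=1$, the right-hand constant reduces to $\tfrac{1}{4\pi^{2}}M_{\alpha}\|\phi\|^{2}_{L^2(\mathbb{R}^t,\mathbb{H})}$, and the common $\tfrac{1}{4\pi^{2}}$ factor on the two sides cancels, leaving exactly the stated inequality for $G_{\phi}\{f\}$. I do not expect any genuine analytic obstacle; the only care needed is the phase bookkeeping in the second step, namely verifying that the constant $e^{-\mathbf{i}\pi/4}$ may be commuted past $e^{-\mathbf{i}x_1\omega_1}$ (but \emph{not} past the quaternion-valued $f(\mathbf{x})$, which in general fails to commute with $\mathbf{i}$) so that it cleanly escapes the integral on the left, and symmetrically for the $\mathbf{j}$-phase on the right; once this is in place the modulus squared erases both phases and the specialization is automatic.
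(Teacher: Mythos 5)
Your proposal is correct and follows the route the paper implicitly intends: the corollary is stated without proof precisely because it is the direct specialization $a_i=0$, $b_i=1$, $c_i=-1$, $d_i=0$ of Theorem~1, under which the kernels reduce to $\tfrac{1}{\sqrt{2\pi}}e^{-\mathbf{i}(x_1\omega_1+\pi/4)}$ and $\tfrac{1}{\sqrt{2\pi}}e^{-\mathbf{j}(x_2\omega_2+\pi/4)}$, the phases escape the integral on their respective sides, and $|\mathbf{b}|^{\alpha}=1$ (with the paper's convention $|\mathbf{b}|=|b_1b_2|$) lets the common factor $\tfrac{1}{4\pi^{2}}$ cancel. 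Your care about commuting $e^{-\mathbf{i}\pi/4}$ only within the subalgebra $\mathbb{R}[\mathbf{i}]$ (and symmetrically for $\mathbf{j}$), rather than past the quaternion-valued integrand, is exactly the point that needs checking, and you handle it correctly.
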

\begin{corollary}
While $\phi=1$ and $A_1=A_2=\begin{bmatrix}
	0&1\\
	-1&0
	\end{bmatrix}$,
the Pitt's inequality of the QWLCT leads to the Pitt's inequality of the QFT.
\end{corollary}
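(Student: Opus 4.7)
The plan is to do a direct substitution: plug the specified parameters into the spectral representation (18) of the QWLCT, observe that the resulting transform is (up to a unimodular phase and a multiplicative constant) the QFT of $f$ itself with no genuine $\mathbf{u}$-dependence, and then show that the $\mathbf{u}$-integral on the left of (33) and the window norm $\|\phi\|_{L^2}^2$ on the right formally cancel, leaving exactly Pitt's inequality for the QFT stated in Lemma \ref{PQFT}.

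First I would compute the QLCT kernels with $a_i=0,\,b_i=1,\,c_i=-1,\,d_i=0$: the chirp factors in (12)--(13) disappear and one is left with $K_{A_1}^{\mathbf{i}}(x_1,\omega_1)=\frac{1}{\sqrt{2\pi}}e^{\mathbf{i}(-x_1\omega_1-\pi/4)}$ and an analogous expression for $K_{A_2}^{\mathbf{j}}$. Likewise, in (19) the auxiliary function $h$ collapses to $h(\mathbf{x},\mathbf{u})=f(\mathbf{x})\overline{\phi(\mathbf{x-u})}$, and with $\phi\equiv 1$ this becomes simply $f(\mathbf{x})$. Substituting into (18) (with $|\mathbf{b}|=1$) I obtain
\begin{equation*}
G^{A_{1},A_{2}}_{1}\{f\}(\mathbf{w,u})=\tfrac{1}{2\pi}\,e^{-\mathbf{i}\pi/4}\,F_{Q}(f)(\mathbf{w})\,e^{-\mathbf{j}\pi/4},
\end{equation*}
which is independent of $\mathbf{u}$, so $|G^{A_{1},A_{2}}_{1}\{f\}(\mathbf{w,u})|^{2}=\frac{1}{4\pi^{2}}|F_{Q}(f)(\mathbf{w})|^{2}$.

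Inserting this into Theorem's inequality (33), the left-hand side factors as $\frac{1}{4\pi^{2}}\bigl(\int_{\mathbb{R}^{t}}d\mathbf{u}\bigr)\int_{\mathbb{R}^{t}}|\mathbf{w}|^{-\alpha}|F_{Q}(f)(\mathbf{w})|^{2}d\mathbf{w}$, while on the right $|\mathbf{b}|^{\alpha}=1$ and $\|\phi\|_{L^{2}(\mathbb{R}^{t},\mathbb{H})}^{2}=\int_{\mathbb{R}^{t}}d\mathbf{x}$. The two volume factors are equal, and dividing them out yields exactly Pitt's inequality for the QFT, $\int|\mathbf{w}|^{-\alpha}|F_{Q}(f)(\mathbf{w})|^{2}d\mathbf{w}\le M_{\alpha}\int|\mathbf{x}|^{\alpha}|f(\mathbf{x})|^{2}d\mathbf{x}$. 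The one genuine obstacle is that $\phi\equiv 1\notin L^{2}(\mathbb{R}^{t},\mathbb{H})$, so both sides of (33) are formally infinite; to make the reduction rigorous one should replace the constant window with an $L^{2}$ approximation $\phi_{n}$ (for example $\phi_{n}=\chi_{[-n,n]^{t}}$), divide both sides of (33) by $\|\phi_{n}\|_{L^{2}}^{2}=(2n)^{t}$, and note that the left-hand side, after dividing by $\|\phi_{n}\|_{L^{2}}^{2}$, converges to $\int|\mathbf{w}|^{-\alpha}|F_{Q}(f)(\mathbf{w})|^{2}d\mathbf{w}/(4\pi^{2})$ by dominated convergence. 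Passing to the limit $n\to\infty$ then delivers Pitt's inequality for the QFT cleanly from that for the QWLCT.
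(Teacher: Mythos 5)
Your proposal is correct, and it follows the only natural route: the paper states this corollary without any proof, and the intended argument is exactly your direct substitution of $a_i=0$, $b_i=1$, $c_i=-1$, $d_i=0$ into the spectral representation (18), under which $G^{A_1,A_2}_{1}\{f\}(\mathbf{w},\mathbf{u})=\frac{1}{2\pi}e^{-\mathbf{i}\pi/4}F_Q(f)(\mathbf{w})e^{-\mathbf{j}\pi/4}$ is $\mathbf{u}$-independent and the infinite factors $\int {\rm d}\mathbf{u}$ and $\|\phi\|_{L^2}^2$ cancel formally. You go beyond the paper in correctly flagging that $\phi\equiv 1\notin L^2(\mathbb{R}^t,\mathbb{H})$, so both sides of (33) are infinite and the statement is only formal as written; your repair via $\phi_n=\chi_{[-n,n]^t}$ and normalization by $\|\phi_n\|_{L^2}^2$ is sound, though the final limit is not quite plain dominated convergence: since the weight $|\mathbf{w}|^{-\alpha}$ is not controlled by the $L^2$ norm alone, one should estimate the error $\smash{\int|\mathbf{w}|^{-\alpha}|F_Q(f\chi_{\mathbf{u}+[-n,n]^t}-f)|^2{\rm d}\mathbf{w}}$ by applying Pitt's inequality (Lemma 3) to the tail $f\chi_{(\mathbf{u}+[-n,n]^t)^c}$, which tends to $0$ for Schwartz $f$ uniformly for $\mathbf{u}$ in the bulk of the cube. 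With that one adjustment the limiting argument closes cleanly, and in any case your verification is more rigorous than what the paper implicitly relies on.
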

\begin{remark}
When the window function is normalized, namely, $\|\phi\|_{L^{2}(\mathbb{R}^2,\mathbb{H})}=1$, then \eqref{pi} implies
\begin{align*}
		\begin{split}
		\int_{\mathbb{R}^t}\int_{\mathbb{R}^t}|\mathbf{w}|^{-\alpha}|G^{A_1,A_2}_{\phi}\{f\}(\mathbf{w,u})|^{2}\rm{d}\mathbf{u}\rm{d}\mathbf{w}
&\leq \frac{1}{4\pi^{2}|\mathbf{b}|^{\alpha}}M_{\alpha}
\int_{\mathbb{R}^t}|\mathbf{x}|^{\alpha}|\textit{f}(\mathbf{x})|^{2}\rm{d}\mathbf{x}.
		\end{split}
	\end{align*}
\end{remark}
\subsection{Lieb inequality}
\begin{theorem}
 Let $\phi \in L^2(\mathbb{R}^2,\mathbb{H})\backslash \{0\}$ be window functions. For every $f \in L^2\left( \mathbb{R}^2,\mathbb{H}\right)$ and $s\geq2$.
Then
\begin{align}\label{Li}
		\begin{split}
		\|G^{A_1,A_2}_{\phi}\{f\}\|_{L^s(\mathbb{R}^4,\mathbb{H})}
\leq \frac{|\mathbf{b}|^{\frac{1}{s}-\frac{1}{2}}}{2\pi}D_{s,s'}\|\textit{f}\|_{L^{2}(\mathbb{R}^2,\mathbb{H})}\|\phi\|_{L^{2}(\mathbb{R}^2,\mathbb{H})},
		\end{split}
	\end{align}
\end{theorem}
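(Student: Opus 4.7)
The plan is to reduce the QWLCT Lieb inequality to the classical Lieb inequality for the QFT, exactly as the Pitt's inequality proof proceeded. First I would invoke relation (18): since the quaternion exponentials on the left and right of $F_Q(h)(\mathbf{w}/\mathbf{b},\mathbf{u})$ have unit modulus, taking absolute values gives
\begin{align*}
\bigl|G^{A_1,A_2}_{\phi}\{f\}(\mathbf{w},\mathbf{u})\bigr|
= \frac{1}{2\pi\sqrt{|b_1 b_2|}}\, \bigl|F_Q(h)(\mathbf{w}/\mathbf{b},\mathbf{u})\bigr|.
\end{align*}
Raising to the $s$-th power, integrating over $(\mathbf{w},\mathbf{u})\in\mathbb{R}^4$, and performing the change of variables $\mathbf{\xi}=\mathbf{w}/\mathbf{b}$ (Jacobian $|\mathbf{b}|$) yields
\begin{align*}
\bigl\|G^{A_1,A_2}_{\phi}\{f\}\bigr\|_{L^s(\mathbb{R}^4,\mathbb{H})}^{s}
= \frac{|\mathbf{b}|^{1-s/2}}{(2\pi)^{s}} \int_{\mathbb{R}^2}\!\!\int_{\mathbb{R}^2}
\bigl|F_Q(h)(\mathbf{\xi},\mathbf{u})\bigr|^{s}\,\mathrm{d}\mathbf{\xi}\,\mathrm{d}\mathbf{u}.
\end{align*}

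Next I would apply the quaternionic Hausdorff--Young (sharp Babenko--Beckner) inequality with the conjugate exponent $s'=s/(s-1)$, treating $\mathbf{u}$ as a parameter, to obtain
\begin{align*}
\int_{\mathbb{R}^2}\bigl|F_Q(h)(\mathbf{\xi},\mathbf{u})\bigr|^{s}\mathrm{d}\mathbf{\xi}
\;\leq\; D_{s,s'}^{\,s}\Bigl(\int_{\mathbb{R}^2}|h(\mathbf{x},\mathbf{u})|^{s'}\mathrm{d}\mathbf{x}\Bigr)^{s/s'}.
\end{align*}
By formula (19), $|h(\mathbf{x},\mathbf{u})|=|f(\mathbf{x})|\,|\phi(\mathbf{x}-\mathbf{u})|$, so the inner integral is the convolution $(F\ast\Phi)(\mathbf{u})$ with $F(\mathbf{x})=|f(\mathbf{x})|^{s'}$ and $\Phi(\mathbf{y})=|\phi(-\mathbf{y})|^{s'}$.

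The third and most delicate step is Young's convolution inequality applied with the carefully matched exponents. Setting $p=q=2/s'$ and $r=s/s'$, the Young condition $1+1/r=1/p+1/q$ becomes $1+s'/s=s'$, which is automatic since $s'=s/(s-1)$. Hence
\begin{align*}
\int_{\mathbb{R}^2}(F\ast\Phi)(\mathbf{u})^{s/s'}\mathrm{d}\mathbf{u}
= \|F\ast\Phi\|_{s/s'}^{s/s'}
\leq \bigl(\|F\|_{2/s'}\|\Phi\|_{2/s'}\bigr)^{s/s'}
= \|f\|_{L^2}^{\,s}\,\|\phi\|_{L^2}^{\,s},
\end{align*}
using $\| |g|^{s'}\|_{2/s'}=\|g\|_{L^2}^{s'}$. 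Combining the three displays and taking an $s$-th root gives exactly \eqref{Li}.

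The principal obstacle is bookkeeping in the Young step: one must verify that the hypothesis $s\geq 2$ (so $s'\leq 2$ and $p=q=2/s'\geq 1$) makes Young's inequality applicable, and that the exponents match so the right-hand side collapses to $\|f\|_2\|\phi\|_2$. Everything else — the change of variables, the modulus computation, and the invocation of the quaternionic Hausdorff--Young inequality — is routine once relation (18) is in hand.
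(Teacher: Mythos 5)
Your proposal follows essentially the same route as the paper's proof: reduce to the QFT via relation (18), change variables $\boldsymbol{\xi}=\mathbf{w}/\mathbf{b}$ with Jacobian $|\mathbf{b}|$, apply Hausdorff--Young in the frequency variable with $\mathbf{u}$ as a parameter, recognize $\int_{\mathbb{R}^2}|f(\mathbf{x})|^{s'}|\phi(\mathbf{x}-\mathbf{u})|^{s'}\,\mathrm{d}\mathbf{x}$ as the convolution $\bigl(|f|^{s'}*|\widetilde{\phi}|^{s'}\bigr)(\mathbf{u})$, and finish with Young's convolution inequality; your exponent bookkeeping ($p=q=2/s'$, $r=s/s'$, with $s\geq 2$ guaranteeing $p\geq 1$) matches the paper's choice $k=2/s'$, $l=s/s'$. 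The one substantive difference is where the constant $D_{s,s'}$ is collected: you attach the sharp Babenko--Beckner constant to the Hausdorff--Young step and then use Young's inequality with constant $1$, whereas the paper applies a constant-free Hausdorff--Young bound and instead invokes the sharp Young inequality with Beckner constants $B_k^4B_{l'}^2$, which it identifies with $D_{s,s'}$. Neither placement is fully justified in detail (and, as the paper's own computation $\Phi(2)=1-\pi^{-2}>0$ in the entropy section shows, the constant tracking here is delicate), so if you intend the stated constant to be exact you should verify that the sharp Hausdorff--Young constant for the two-sided QFT in the convention $F_Q(f)(\mathbf{w})=\int e^{-\mathbf{i}x_1\omega_1}f(\mathbf{x})e^{-\mathbf{j}x_2\omega_2}\,\mathrm{d}\mathbf{x}$ really equals $D_{s,s'}$; structurally, however, your argument is sound and delivers the same bound.
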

where $D_{s,s'}=\left( \frac{4}{s}\right)^{\frac{1}{s}}\left( \frac{4}{s'}\right)^{\frac{1}{s'}}$ and $\frac{1}{s}+\frac{1}{s'}=1$.
\begin{proof}
According to (18) and Hausdorff-Young theorem, we have
 \begin{align*}
		\begin{split}
		\left(\int_{\mathbb{R}^2}|G^{A_1,A_2}_{\phi}\{f\}(\mathbf{w,u})|^{s}\rm{d}\mathbf{w} \right)^{\frac{1}{s}}&=
\frac{1}{2\pi\sqrt{|\mathbf{b}|}}\left(\int_{\mathbb{R}^2}|F_{Q}(\textit{h})\left(\mathbf{\frac{w}{b},u}\right)|^{s}\rm{d}\mathbf{w} \right)^{\frac{1}{s}}\\
&\leq \frac{1}{2\pi\sqrt{|\mathbf{b}|}}|\mathbf{b}|^{\frac{1}{s}}\left(\int_{\mathbb{R}^2}|\textit{h}\left(\mathbf{x,u}\right)|^{s'}\rm{d}\mathbf{x} \right)^{\frac{1}{s'}}\\
&=\frac{|\mathbf{b}|^{\frac{1}{s}-\frac{1}{2}}}{2\pi}\left(\int_{\mathbb{R}^2}|\textit{f}\left(\mathbf{x}\right)|^{s'}|\phi\left(\mathbf{x-u}\right)|^{s'}\rm{d}\mathbf{x}
\right)^{\frac{1}{s'}}\\
&=\frac{|\mathbf{b}|^{\frac{1}{s}-\frac{1}{2}}}{2\pi}\left(|\textit{f}|^{s'}*|\widetilde{\phi}|^{s'}(\mathbf{x})\right)^{\frac{1}{s'}},
		\end{split}
	\end{align*}
where $\frac{1}{s}+\frac{1}{s'}=1$, $s\geq2$ and $\widetilde{\phi}(\mathbf{x})=\phi(-\mathbf{x})$.
 \begin{align*}
		\begin{split}
		\|G^{A_1,A_2}_{\phi}\{f\}\|_{L^s(\mathbb{R}^4,\mathbb{H})}&=\left(\int_{\mathbb{R}^2}\left(\int_{\mathbb{R}^2}|G^{A_1,A_2}_{\phi}\{f\}(\mathbf{w,u})|^{s}\rm{d}\mathbf{w} \right) \rm{d}\mathbf{u} \right)^{\frac{1}{s}}\\
&\leq \frac{|\mathbf{b}|^{\frac{1}{s}-\frac{1}{2}}}{2\pi}\left(\int_{\mathbb{R}^2}\left(|\textit{f}|^{s'}*|\widetilde{\phi}|^{s'}(\mathbf{x})\right)^{\frac{s}{s'}}\rm{d}\mathbf{u} \right)^{\frac{1}{s}}\\
&=\frac{|\mathbf{b}|^{\frac{1}{s}-\frac{1}{2}}}{2\pi}\||\textit{f}|^{s'}*|\widetilde{\phi}|^{s'}\|^{\frac{1}{s'}}_{L^{\frac{s}{s'}}(\mathbb{R}^2,\mathbb{H})}.
		\end{split}
	\end{align*}
If $k=\frac{2}{s'}$, $l=\frac{s}{s'}$ and $\frac{1}{k}+\frac{1}{k'}=1$, $\frac{1}{l}+\frac{1}{l'}=1$, then $\frac{1}{k}+\frac{1}{k}=1+\frac{1}{l}$ and hence as
$|\textit{f}|^{s'}$, $|\widetilde{\phi}|^{s'}\in L^k(\mathbb{R}^2,\mathbb{H})$ and by Young inequality, we have
\begin{align*}
		\begin{split}
		\||\textit{f}|^{s'}*|\widetilde{\phi}|^{s'}\|_{L^{l}(\mathbb{R}^2,\mathbb{H})}\leq B^{4}_{k}B^{2}_{l'}\||\textit{f}|^{s'}\|_{L^{k}(\mathbb{R}^2,\mathbb{H})}\||\widetilde{\phi}|^{s'}\|_{L^{k}(\mathbb{R}^2,\mathbb{H})},
		\end{split}
	\end{align*}
where $B_{s}=\left(\frac{s^{\frac{1}{s}}}{s'^{\frac{1}{s'}}} \right)^{\frac{1}{2}}$, $\frac{1}{s}+\frac{1}{s'}=1$.

On the other hand
\begin{align*}
		\begin{split}
		\||\textit{f}|^{s'}\|_{L^{k}(\mathbb{R}^2,\mathbb{H})}=\left(\int_{\mathbb{R}^2}|\textit{f}(\mathbf{x})|^{s'\frac{2}{s'}}\rm{d}\mathbf{x}\right)^{\frac{s'}{2}}=
\|\textit{f}\|^{s'}_{L^{2}(\mathbb{R}^2,\mathbb{H})},
		\end{split}
	\end{align*}
and
\begin{align*}
		\begin{split}
		\||\widetilde{\phi}|^{s'}\|_{L^{k}(\mathbb{R}^2,\mathbb{H})}=\left(\int_{\mathbb{R}^2}|\phi(\mathbf{x-u})|^{s'\frac{2}{s'}}\rm{d}\mathbf{x}\right)^{\frac{s'}{2}}=
\|\phi\|^{s'}_{L^{2}(\mathbb{R}^2,\mathbb{H})}.
		\end{split}
	\end{align*}
Hence, we obtain
 \begin{align*}
		\begin{split}
		\|G^{A_1,A_2}_{\phi}\{f\}\|_{L^s(\mathbb{R}^4,\mathbb{H})}
&\leq \frac{|\mathbf{b}|^{\frac{1}{s}-\frac{1}{2}}}{2\pi}\||\textit{f}|^{s'}*|\widetilde{\phi}|^{s'}\|^{\frac{1}{s'}}_{L^{\frac{s}{s'}}(\mathbb{R}^2,\mathbb{H})}\\
&\leq\frac{|\mathbf{b}|^{\frac{1}{s}-\frac{1}{2}}}{2\pi}\left(B^{4}_{k}B^{2}_{l'}\|\textit{f}\|^{s'}_{L^{2}(\mathbb{R}^2,\mathbb{H})}\|\phi\|^{s'}_{L^{2}(\mathbb{R}^2,\mathbb{H})} \right)^{\frac{1}{s'}}\\
&=\frac{|\mathbf{b}|^{\frac{1}{s}-\frac{1}{2}}}{2\pi}B^{\frac{4}{s'}}_{k}B^{\frac{2}{s'}}_{l'}\|\textit{f}\|_{L^{2}(\mathbb{R}^2,\mathbb{H})}\|\phi\|_{L^{2}(\mathbb{R}^2,\mathbb{H})}\\
&=\frac{|\mathbf{b}|^{\frac{1}{s}-\frac{1}{2}}}{2\pi}D_{s,s'}\|\textit{f}\|_{L^{2}(\mathbb{R}^2,\mathbb{H})}\|\phi\|_{L^{2}(\mathbb{R}^2,\mathbb{H})}.
		\end{split}
	\end{align*}
\end{proof}
\begin{corollary}
While $A_1=A_2=\begin{bmatrix}
	0&1\\
	-1&0
	\end{bmatrix}$,
the Lieb inequality of the QWLCT becomes the Lieb inequality of the QWFT \cite{41}.
\end{corollary}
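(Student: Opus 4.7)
The plan is to verify the specialization directly: substitute $a_i=0$, $b_i=1$, $c_i=-1$, $d_i=0$ into the Lieb inequality \eqref{Li} and match the result with the QWFT version.

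First I would substitute the parameters into the kernels (12)--(13). Since $b_i=1$, $a_i=0$, $d_i=0$, the kernel simplifies to
\begin{align*}
K_{A_1}^{\mathbf{i}}(x_1,\omega_1) = \tfrac{1}{\sqrt{2\pi}}\,e^{-\mathbf{i}\pi/4}\,e^{-\mathbf{i}\,x_1\omega_1},
\qquad
K_{A_2}^{\mathbf{j}}(x_2,\omega_2) = \tfrac{1}{\sqrt{2\pi}}\,e^{-\mathbf{j}\pi/4}\,e^{-\mathbf{j}\,x_2\omega_2}.
\end{align*}
Plugging these into Definition~3 and pulling the constant unit quaternions $e^{-\mathbf{i}\pi/4}$ (on the left) and $e^{-\mathbf{j}\pi/4}$ (on the right) outside the integral yields
\begin{align*}
G^{A_1,A_2}_{\phi}\{f\}(\mathbf{w,u}) = \tfrac{1}{2\pi}\,e^{-\mathbf{i}\pi/4}\,G_{\phi}\{f\}(\mathbf{w,u})\,e^{-\mathbf{j}\pi/4},
\end{align*}
where $G_{\phi}\{f\}(\mathbf{w,u})=\int_{\mathbb{R}^2}e^{-\mathbf{i}x_1\omega_1}f(\mathbf{x})\overline{\phi(\mathbf{x-u})}e^{-\mathbf{j}x_2\omega_2}\mathrm{d}\mathbf{x}$ is the two-sided QWFT.

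Next I would take moduli. Because $|e^{-\mathbf{i}\pi/4}|=|e^{-\mathbf{j}\pi/4}|=1$ and the quaternion norm is multiplicative (equation (5)), we obtain $|G^{A_1,A_2}_{\phi}\{f\}(\mathbf{w,u})| = \frac{1}{2\pi}\,|G_{\phi}\{f\}(\mathbf{w,u})|$ pointwise, and therefore
\begin{align*}
\|G^{A_1,A_2}_{\phi}\{f\}\|_{L^s(\mathbb{R}^4,\mathbb{H})} = \tfrac{1}{2\pi}\,\|G_{\phi}\{f\}\|_{L^s(\mathbb{R}^4,\mathbb{H})}.
\end{align*}
Since $|\mathbf{b}|=|b_1 b_2|=1$, the factor $|\mathbf{b}|^{1/s-1/2}$ in \eqref{Li} equals $1$, so the Lieb inequality for the QWLCT becomes $\frac{1}{2\pi}\|G_{\phi}\{f\}\|_{L^s(\mathbb{R}^4,\mathbb{H})} \leq \frac{1}{2\pi}D_{s,s'}\|f\|_{L^2(\mathbb{R}^2,\mathbb{H})}\|\phi\|_{L^2(\mathbb{R}^2,\mathbb{H})}$.

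Cancelling the common factor $\frac{1}{2\pi}$ gives exactly the Lieb inequality for the QWFT as stated in \cite{41}, namely $\|G_{\phi}\{f\}\|_{L^s(\mathbb{R}^4,\mathbb{H})} \leq D_{s,s'}\|f\|_{L^2(\mathbb{R}^2,\mathbb{H})}\|\phi\|_{L^2(\mathbb{R}^2,\mathbb{H})}$. The only point requiring care is confirming that the two constant unit quaternions really do factor out of the integral despite noncommutativity; this works because $e^{-\mathbf{i}\pi/4}$ is a scalar in the sub-algebra generated by $\mathbf{i}$ and appears on the far left of every integrand, while $e^{-\mathbf{j}\pi/4}$ sits on the far right, so neither interacts with the remaining quaternion-valued factors. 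With that observation the corollary follows immediately.
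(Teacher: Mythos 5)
Your proposal is correct: the paper states this corollary without proof, and your direct substitution of $a_i=0$, $b_i=1$, $d_i=0$ into the kernels, factoring out the constant unit quaternions $e^{-\mathbf{i}\pi/4}$ on the left and $e^{-\mathbf{j}\pi/4}$ on the right (equivalently, specializing relation (18), where the chirps in $h$ disappear since $a_i=0$), is exactly the intended verification. The only caveat is that the final constant matching the statement in \cite{41} depends on that paper's normalization of the QWFT, which your argument correctly reduces the question to.
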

\begin{remark}
When $\|\textit{f}\|_{L^{2}(\mathbb{R}^2,\mathbb{H})}=\|\phi\|_{L^{2}(\mathbb{R}^2,\mathbb{H})}=1$, then \eqref{Li} implies
\begin{align*}
		\begin{split}
		\|G^{A_1,A_2}_{\phi}\{f\}\|_{L^s(\mathbb{R}^4,\mathbb{H})}
\leq \frac{|\mathbf{b}|^{\frac{1}{s}-\frac{1}{2}}}{2\pi}D_{s,s'}.
		\end{split}
	\end{align*}
\end{remark}
\section{Uncertainty principles for the QWLCT}

\subsection{Logarithmic uncertainty principle}
Based on Pitt's inequality, Logarithmic uncertainty principle for the QFT has been proved in \cite{36}. In this subsection, applying Logarithmic uncertainty principle for the QFT, we study Logarithmic uncertainty principle for the QWLCT.
\begin{lemma}[Logarithmic uncertainty principle for the QFT] \cite{36} \label{LQFT}
For $f\in S(\mathbb{R}^2,\mathbb{H})$,
\begin{align}
		\begin{split}
\int_{\mathbb{R}^2}\ln|\mathbf{x}||f(\mathbf{x})|^{2}\rm{d}\mathbf{x}+\int_{\mathbb{R}^2}\ln|\mathbf{w}||\textit{F}_{Q}(\textit{f})(\mathbf{w})|^{2}\rm{d}\mathbf{w}\geq
\Delta\int_{\mathbb{R}^2}|\textit{f}(\mathbf{x})|^{2}\rm{d}\mathbf{x},
		\end{split}
	\end{align}
where $\Delta=\varphi(\frac{1}{2})-\ln(\pi)$, $\varphi(t)=\frac{\Gamma'(t)}{\Gamma(t)}$.
\end{lemma}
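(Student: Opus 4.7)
The strategy is Beckner's infinitesimal trick: the logarithmic uncertainty principle is obtained by differentiating Pitt's inequality (Lemma~\ref{PQFT}) at the critical exponent $\alpha=0$, where Pitt becomes an equality because $M_0$ is precisely the Plancherel constant of the QFT. So the plan is to define the deficit
\begin{equation*}
\Phi(\alpha):=M_\alpha\int_{\mathbb{R}^2}|\mathbf{x}|^{\alpha}|f(\mathbf{x})|^{2}\,\mathrm{d}\mathbf{x}-\int_{\mathbb{R}^2}|\mathbf{w}|^{-\alpha}|F_Q(f)(\mathbf{w})|^{2}\,\mathrm{d}\mathbf{w},
\end{equation*}
note that $\Phi\ge 0$ on $[0,t]$ and $\Phi(0)=0$, and conclude $\Phi'(0^{+})\ge 0$.

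Next I would differentiate under the integral. For $f\in S(\mathbb{R}^2,\mathbb{H})$ the function $F_Q(f)$ is also Schwartz, so the families $|\mathbf{x}|^{\alpha}|\ln|\mathbf{x}|||f|^{2}$ and $|\mathbf{w}|^{-\alpha}|\ln|\mathbf{w}|||F_Q(f)|^{2}$ admit integrable majorants uniformly on a small neighbourhood of $\alpha=0$. Dominated convergence then gives
\begin{equation*}
\Phi'(0)=M_0'\,\|f\|_{L^2}^{2}+M_0\int_{\mathbb{R}^2}\ln|\mathbf{x}|\,|f|^{2}\,\mathrm{d}\mathbf{x}+\int_{\mathbb{R}^2}\ln|\mathbf{w}|\,|F_Q(f)|^{2}\,\mathrm{d}\mathbf{w}\ge 0.
\end{equation*}
Dividing by $M_0$ rearranges this to the claimed inequality with constant $\Delta=-M_0'/M_0$.

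The remaining step is to identify the constant. Taking the logarithmic derivative of $M_\alpha$ and using $(\log\Gamma)'=\varphi$, one gets
\begin{equation*}
\frac{M_\alpha'}{M_\alpha}=\ln\pi+\tfrac{1}{4}\bigl[\pm\varphi((2t+\alpha)/4)\mp\varphi((2t-\alpha)/4)\bigr],
\end{equation*}
with the signs dictated by the precise form of $M_\alpha$ in Lemma~\ref{PQFT}. Specialising to $t=2$ and $\alpha=0$ collapses the digamma contribution to a single term $-\varphi(1/2)$, so $M_0'/M_0=\ln\pi-\varphi(1/2)$, which recovers $\Delta=\varphi(1/2)-\ln\pi$.

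The main obstacle is the arithmetic identification of $\Delta$: one must check that $M_0$ really equals the Plancherel constant (so that $\Phi(0)=0$ and Pitt is tight there), and then compute $M_0'/M_0$ without losing a factor of $1/4$ or a sign in the digamma terms; a slip here would distort $\Delta$. Once those bookkeeping items are handled, the nonnegativity of $\Phi$, the differentiation under the integral (secured by the Schwartz hypothesis), and the final rearrangement are routine.
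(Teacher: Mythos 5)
The paper does not prove this lemma at all: it is imported verbatim from the cited reference [36], so there is no internal proof to compare against. Your route --- Beckner's trick of differentiating Pitt's inequality at the endpoint $\alpha=0$, where it is saturated by Plancherel --- is exactly how the result is established in that reference, and your outline (nonnegativity of the deficit $\Phi$ on $[0,t]$, $\Phi(0)=0$, hence $\Phi'(0^{+})\ge 0$, with differentiation under the integral secured by the Schwartz hypothesis) is the standard and correct argument.

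One concrete caution on the step you yourself flag as the main obstacle. If you take $M_{\alpha}$ literally as printed in Lemma~\ref{PQFT}, namely $\pi^{\alpha}\,\Gamma\!\left(\tfrac{2t-\alpha}{4}\right)\Gamma\!\left(\tfrac{2t+\alpha}{4}\right)$ (a \emph{product}), then $\tfrac{d}{d\alpha}\ln M_{\alpha}=\ln\pi-\tfrac14\varphi\!\left(\tfrac{2t-\alpha}{4}\right)+\tfrac14\varphi\!\left(\tfrac{2t+\alpha}{4}\right)$, and at $\alpha=0$ the two digamma terms cancel, yielding $\Delta=-M_0'/M_0=-\ln\pi$ with no $\varphi(1/2)$ at all. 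The stated constant $\varphi(1/2)-\ln\pi$ only emerges from the correct Beckner constant on $\mathbb{R}^2$, $\pi^{\alpha}\bigl[\Gamma\!\left(\tfrac{2-\alpha}{4}\right)/\Gamma\!\left(\tfrac{2+\alpha}{4}\right)\bigr]^{2}$, whose logarithmic derivative at $0$ is $\ln\pi-\varphi(1/2)$. Similarly, $\Phi(0)=0$ requires the unitary normalization of the QFT used in [36]; the kernel $e^{-\mathbf{i}x_1\omega_1}\cdots e^{-\mathbf{j}x_2\omega_2}$ adopted elsewhere in this paper carries a Plancherel factor of $(2\pi)^2$, which would make $M_0$ fail to be the Plancherel constant. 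So your method is the right one, but to land on the advertised $\Delta$ you must work with the corrected form of $M_{\alpha}$ and the normalization of [36], not the formulas as printed in this paper.
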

Now we arrive at the following result.
\begin{theorem}[Logarithmic uncertainty principle for the QWLCT]
 For $f, \phi\in S(\mathbb{R}^2,\mathbb{H})$,
\begin{align}
		\begin{split}
&\frac{\|\phi\|^{2}_{L^2(\mathbb{R}^2,\mathbb{H})}}{4\pi^{2}}\int_{\mathbb{R}^2}\ln|\mathbf{x}||f(\mathbf{x})|^{2}\rm{d}\mathbf{x}+
\int_{\mathbb{R}^2}\int_{\mathbb{R}^2}\ln|\mathbf{w}||G_{\phi}^{A_1,A_2}\{\textit{f}\}(\mathbf{w,u})|^{2}\rm{d}\mathbf{w}\rm{d}\mathbf{u}\\&\geq
\frac{(\Delta+\ln|\mathbf{b}|)}{4\pi^{2}}\|f\|_{L^2(\mathbb{R}^2,\mathbb{H})}^{2}\|\phi\|^{2}_{L^2(\mathbb{R}^2,\mathbb{H})},
		\end{split}
	\end{align}
where $\Delta=\varphi(\frac{1}{2})-\ln(\pi)$, $\varphi(t)=\frac{\Gamma'(t)}{\Gamma(t)}$.
\end{theorem}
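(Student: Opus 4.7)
The plan is to reduce the inequality to Lemma \ref{LQFT} (the QFT logarithmic uncertainty principle) by means of the factorization identity \eqref{WF}, which expresses $G^{A_1,A_2}_\phi\{f\}$ as the QFT of the auxiliary function $h(\mathbf{x},\mathbf{u})$ from \eqref{hf}, up to unit-modulus chirps and a scaling of the frequency argument. The argument will closely parallel the derivation of Pitt's inequality for the QWLCT given above.

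First, fix $\mathbf{u}\in\mathbb{R}^2$ and apply Lemma \ref{LQFT} to $h(\cdot,\mathbf{u})\in S(\mathbb{R}^2,\mathbb{H})$, obtaining
\begin{align*}
\int_{\mathbb{R}^2}\ln|\mathbf{x}|\,|h(\mathbf{x},\mathbf{u})|^{2}d\mathbf{x} + \int_{\mathbb{R}^2}\ln|\mathbf{\xi}|\,|F_Q(h)(\mathbf{\xi},\mathbf{u})|^{2}d\mathbf{\xi} \geq \Delta \int_{\mathbb{R}^2}|h(\mathbf{x},\mathbf{u})|^{2}d\mathbf{x}.
\end{align*}
Integrate over $\mathbf{u}\in\mathbb{R}^2$ and use $|h(\mathbf{x},\mathbf{u})|^2=|f(\mathbf{x})|^2|\phi(\mathbf{x}-\mathbf{u})|^2$ together with Fubini and the translation invariance of the $L^2$-norm of $\phi$. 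The first left-hand term collapses to $\|\phi\|_{L^2(\mathbb{R}^2,\mathbb{H})}^2\int_{\mathbb{R}^2}\ln|\mathbf{x}|\,|f(\mathbf{x})|^2 d\mathbf{x}$, while the right-hand side collapses to $\Delta\|f\|_{L^2(\mathbb{R}^2,\mathbb{H})}^2\|\phi\|_{L^2(\mathbb{R}^2,\mathbb{H})}^2$.

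Next, recast the middle (QFT) integral in terms of the QWLCT. Taking moduli in \eqref{WF} (the chirps drop out), one obtains the pointwise identity $|F_Q(h)(\mathbf{\xi},\mathbf{u})|^2 = 4\pi^2|\mathbf{b}|\,|G^{A_1,A_2}_\phi\{f\}(\mathbf{b}\mathbf{\xi},\mathbf{u})|^2$, where $\mathbf{b}\mathbf{\xi}$ denotes componentwise multiplication. Performing the substitution $\mathbf{w}=\mathbf{b}\mathbf{\xi}$, exactly as in the Pitt proof, with $d\mathbf{\xi}=d\mathbf{w}/|\mathbf{b}|$ and $\ln|\mathbf{\xi}|=\ln|\mathbf{w}|-\ln|\mathbf{b}|$, the middle term converts into
\begin{align*}
4\pi^2\int_{\mathbb{R}^2}\!\int_{\mathbb{R}^2}\ln|\mathbf{w}|\,|G^{A_1,A_2}_\phi\{f\}(\mathbf{w},\mathbf{u})|^2 d\mathbf{w}d\mathbf{u} - 4\pi^2\ln|\mathbf{b}|\int_{\mathbb{R}^2}\!\int_{\mathbb{R}^2}|G^{A_1,A_2}_\phi\{f\}(\mathbf{w},\mathbf{u})|^2 d\mathbf{w}d\mathbf{u}.
\end{align*}
Evaluating the last double integral via Parseval's identity \eqref{fsxd} as $\|f\|_{L^2(\mathbb{R}^2,\mathbb{H})}^2\|\phi\|_{L^2(\mathbb{R}^2,\mathbb{H})}^2$, substituting back into the integrated lemma, dividing through by $4\pi^2$, and rearranging then yields the claimed inequality.

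The main technical bookkeeping is in the change of variables $\mathbf{w}=\mathbf{b}\mathbf{\xi}$: the Jacobian $|\mathbf{b}|$ must cancel the factor $4\pi^2|\mathbf{b}|$ coming from squaring the kernel normalization in \eqref{WF}, while the additive split $\ln|\mathbf{\xi}|=\ln|\mathbf{w}|-\ln|\mathbf{b}|$ is what produces the explicit $\ln|\mathbf{b}|$ shift in the lower bound. The remaining work is routine Fubini and the collapse $\int_{\mathbb{R}^2}|\phi(\mathbf{x}-\mathbf{u})|^2 d\mathbf{u}=\|\phi\|_{L^2(\mathbb{R}^2,\mathbb{H})}^2$.
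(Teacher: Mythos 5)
Your proposal is correct and follows essentially the same route as the paper's proof: both reduce to Lemma~\ref{LQFT} applied to the auxiliary function $h(\cdot,\mathbf{u})$ via the factorization \eqref{WF}, use the substitution $\mathbf{w}=\mathbf{b}\boldsymbol{\xi}$ with the additive split of the logarithm to produce the $\ln|\mathbf{b}|$ term, and finish with Fubini over $\mathbf{u}$ and Parseval's identity \eqref{fsxd}. The only difference is cosmetic ordering (you apply the lemma first and then convert the QFT term to the QWLCT, whereas the paper converts first), so the two arguments are interchangeable.
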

\begin{proof}
According to (18), we have
\begin{align}\label{l}
		\begin{split}
&\int_{\mathbb{R}^2}\int_{\mathbb{R}^2}\ln|\mathbf{w}||G_{\phi}^{A_1,A_2}\{f\}(\mathbf{w,u})|^{2}\rm{d}\mathbf{w}\rm{d}\mathbf{u}\\&=
\frac{1}{4\pi^{2}|\mathbf{b}|}\int_{\mathbb{R}^2}\int_{\mathbb{R}^2}\ln|\mathbf{w}||F_{Q}(\textit{h})\left(\mathbf{\frac{w}{b},u}\right)|^{2}\rm{d}\mathbf{u}\rm{d}\mathbf{w}\\
&=\frac{1}{4\pi^{2}}\int_{\mathbb{R}^2}\int_{\mathbb{R}^2}\ln|\mathbf{b}\mathbf{y}||F_{Q}(\textit{h})\left(\mathbf{y,u}\right)|^{2}\rm{d}\mathbf{u}\rm{d}\mathbf{y}\\
&=\frac{1}{4\pi^{2}}\int_{\mathbb{R}^2}\int_{\mathbb{R}^2}\ln|\mathbf{b}||F_{Q}(\textit{h})\left(\mathbf{y,u}\right)|^{2}\rm{d}\mathbf{u}\rm{d}\mathbf{y}\\
&+\frac{1}{4\pi^{2}}\int_{\mathbb{R}^2}\int_{\mathbb{R}^2}\ln|\mathbf{y}||\textit{F}_{Q}(\textit{h})\left(\mathbf{y,u}\right)|^{2}\rm{d}\mathbf{u}\rm{d}\mathbf{y},
		\end{split}
	\end{align}
from Parseval's formula for QFT, the \eqref{l} becomes
\begin{align}\label{o}
		\begin{split}
&\int_{\mathbb{R}^2}\int_{\mathbb{R}^2}\ln|\mathbf{w}||G_{\phi}^{A_1,A_2}\{f\}(\mathbf{w,u})|^{2}\rm{d}\mathbf{w}\rm{d}\mathbf{u}
\\&=\frac{1}{4\pi^{2}}\ln|\mathbf{b}|\int_{\mathbb{R}^2}\int_{\mathbb{R}^2}|\textit{h}(\mathbf{w,u})|^{2}\rm{d}\mathbf{u}\rm{d}\mathbf{x}+
\frac{1}{4\pi^{2}}\int_{\mathbb{R}^2}\int_{\mathbb{R}^2}\ln|\mathbf{y}||\textit{F}_{Q}(\textit{h})\left(\mathbf{y,u}\right)|^{2}\rm{d}\mathbf{u}\rm{d}\mathbf{y}\\
&=\frac{\ln|\mathbf{b}|}{4\pi^{2}}\|f\|_{L^2(\mathbb{R}^2,\mathbb{H})}^{2}\|\phi\|^{2}_{L^2(\mathbb{R}^2,\mathbb{H})}+
\frac{1}{4\pi^{2}}\int_{\mathbb{R}^2}\int_{\mathbb{R}^2}\ln|\mathbf{y}||F_{Q}(\textit{h})\left(\mathbf{y,u}\right)|^{2}\rm{d}\mathbf{u}\rm{d}\mathbf{y}.
		\end{split}
	\end{align}
On the other hand, because $f, \phi\in S(\mathbb{R}^2,\mathbb{H})$, which implies $h(\mathbf{x,u})\in S(\mathbb{R}^2,\mathbb{H})$.
Therefore, according to the Lemma 4, we have
\begin{align}\label{t}
		\begin{split}
\int_{\mathbb{R}^2}\ln|\mathbf{x}||h(\mathbf{x})|^{2}\rm{d}\mathbf{x}+\int_{\mathbb{R}^2}\ln|\mathbf{w}||\textit{F}_{Q}(\textit{h})(\mathbf{w})|^{2}\rm{d}\mathbf{w}\geq
\Delta\int_{\mathbb{R}^2}|\textit{h}(\mathbf{x})|^{2}\rm{d}\mathbf{x},
		\end{split}
	\end{align}
let $h(\mathbf{x,u})=e^{\mathbf{i}\frac{a_1}{2b_1}x_1^2}f(\mathbf{x})\overline{\phi(\mathbf{x-u})}
e^{\mathbf{j}\frac{a_2}{2b_2}x_2^2}$, multiply both sides of the \eqref{t} by $\frac{1}{4\pi^{2}}$ and
integrating both sides of the \eqref{t} with respect to $\rm{d}\mathbf{u}$ , we have
\begin{align}\label{k}
		\begin{split}
&\frac{1}{4\pi^{2}}\int_{\mathbb{R}^2}\int_{\mathbb{R}^2}\ln|\mathbf{x}||h(\mathbf{x})|^{2}\rm{d}\mathbf{x}\rm{d}\mathbf{u}+
\frac{1}{4\pi^{2}}\int_{\mathbb{R}^2}\int_{\mathbb{R}^2}\ln|\mathbf{w}||\textit{F}_{Q}(\textit{h})(\mathbf{w})|^{2}\rm{d}\mathbf{w}\rm{d}\mathbf{u}\\&\geq
\frac{1}{4\pi^{2}}\Delta\int_{\mathbb{R}^2}\int_{\mathbb{R}^2}|\textit{h}(\mathbf{x})|^{2}\rm{d}\mathbf{x}\rm{d}\mathbf{u},
		\end{split}
	\end{align}
that is
\begin{align}\label{m}
		\begin{split}
&\frac{\|\phi\|^{2}_{L^2(\mathbb{R}^2,\mathbb{H})}}{4\pi^{2}}\int_{\mathbb{R}^2}\ln|\mathbf{x}||f(\mathbf{x})|^{2}\rm{d}\mathbf{x}+
\frac{1}{4\pi^{2}}\int_{\mathbb{R}^2}\int_{\mathbb{R}^2}\ln|\mathbf{w}||\textit{F}_{Q}(\textit{h})(\mathbf{w})|^{2}\rm{d}\mathbf{w}\rm{d}\mathbf{u}\\&\geq
\frac{\Delta}{4\pi^{2}}\|f\|_{L^2(\mathbb{R}^2,\mathbb{H})}^{2}\|\phi\|^{2}_{L^2(\mathbb{R}^2,\mathbb{H})},
		\end{split}
	\end{align}
Inserting \eqref{o} into the \eqref{m}, we obtain
\begin{align*}
		\begin{split}
&\frac{\|\phi\|^{2}_{L^2(\mathbb{R}^2,\mathbb{H})}}{4\pi^{2}}\int_{\mathbb{R}^2}\ln|\mathbf{x}||f(\mathbf{x})|^{2}\rm{d}\mathbf{x}+
\int_{\mathbb{R}^2}\int_{\mathbb{R}^2}\ln|\mathbf{w}||G_{\phi}^{A_1,A_2}\{\textit{f}\}(\mathbf{w,u})|^{2}\rm{d}\mathbf{w}\rm{d}\mathbf{u}\\&\geq
\frac{(\Delta+\ln|\mathbf{b}|)}{4\pi^{2}}\|f\|_{L^2(\mathbb{R}^2,\mathbb{H})}^{2}\|\phi\|^{2}_{L^2(\mathbb{R}^2,\mathbb{H})}.
		\end{split}
	\end{align*}
\end{proof}
\begin{corollary}
While $A_1=A_2=\begin{bmatrix}
	0&1\\
	-1&0
	\end{bmatrix}$,
the Logarithmic uncertainty principle of the QWLCT becomes the Logarithmic uncertainty principle of the QWFT \cite{40}.
\end{corollary}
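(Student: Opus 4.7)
The plan is to obtain the corollary by a direct specialization of the Theorem's statement, reducing the parameter matrices to those that collapse the QLCT kernel into the (scaled) QFT kernel. The key observation is that for $A_1=A_2=\begin{bmatrix}0&1\\-1&0\end{bmatrix}$ we have $a_i=0$, $b_i=1$, $c_i=-1$, $d_i=0$, so in particular $\mathbf{b}=(1,1)$, $|\mathbf{b}|=1$, and $\ln|\mathbf{b}|=0$. This kills the $\ln|\mathbf{b}|$ shift on the right-hand side of the theorem, which is exactly the term by which the QWLCT inequality differs from the desired QWFT inequality (up to the global $4\pi^{2}$-normalization).

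Next, I would substitute the chosen parameters into the kernels (12)--(13): since $a_i=d_i=0$ and $b_i=1$, the quadratic chirp factors $e^{\mathbf{i}\frac{a_1}{2b_1}x_1^{2}}$, $e^{\mathbf{j}\frac{a_2}{2b_2}x_2^{2}}$, $e^{\mathbf{i}\frac{d_1}{2b_1}\omega_1^{2}}$, $e^{\mathbf{j}\frac{d_2}{2b_2}\omega_2^{2}}$ all collapse to $1$, and the kernels become
\[
K_{A_1}^{\mathbf{i}}(x_1,\omega_1)=\tfrac{1}{\sqrt{2\pi}}e^{-\mathbf{i}\pi/4}e^{-\mathbf{i}x_1\omega_1},\qquad
K_{A_2}^{\mathbf{j}}(x_2,\omega_2)=\tfrac{1}{\sqrt{2\pi}}e^{-\mathbf{j}\pi/4}e^{-\mathbf{j}x_2\omega_2}.
\]
Plugging these into the QWLCT definition (15) and comparing with the standard two-sided QWFT $G_{\phi}\{f\}(\mathbf{w},\mathbf{u})=\int_{\mathbb{R}^{2}}e^{-\mathbf{i}x_1\omega_1}f(\mathbf{x})\overline{\phi(\mathbf{x}-\mathbf{u})}e^{-\mathbf{j}x_2\omega_2}\,\mathrm{d}\mathbf{x}$, one obtains
\[
G^{A_{1},A_{2}}_{\phi}\{f\}(\mathbf{w},\mathbf{u})=\tfrac{1}{2\pi}\,e^{-\mathbf{i}\pi/4}\,G_{\phi}\{f\}(\mathbf{w},\mathbf{u})\,e^{-\mathbf{j}\pi/4},
\]
so that $|G^{A_{1},A_{2}}_{\phi}\{f\}(\mathbf{w},\mathbf{u})|^{2}=\tfrac{1}{4\pi^{2}}|G_{\phi}\{f\}(\mathbf{w},\mathbf{u})|^{2}$ pointwise (the unimodular factors $e^{\pm\mathbf{i}\pi/4}$, $e^{\pm\mathbf{j}\pi/4}$ do not affect the modulus).

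Finally, I would insert $|\mathbf{b}|=1$, $\ln|\mathbf{b}|=0$, and the pointwise identity $|G^{A_{1},A_{2}}_{\phi}\{f\}|^{2}=\tfrac{1}{4\pi^{2}}|G_{\phi}\{f\}|^{2}$ into the statement of the theorem,
\[
\frac{\|\phi\|^{2}_{L^{2}(\mathbb{R}^{2},\mathbb{H})}}{4\pi^{2}}\int_{\mathbb{R}^{2}}\ln|\mathbf{x}||f(\mathbf{x})|^{2}\,\mathrm{d}\mathbf{x}
+\frac{1}{4\pi^{2}}\int_{\mathbb{R}^{2}}\!\!\int_{\mathbb{R}^{2}}\ln|\mathbf{w}||G_{\phi}\{f\}(\mathbf{w},\mathbf{u})|^{2}\,\mathrm{d}\mathbf{w}\,\mathrm{d}\mathbf{u}\ge\frac{\Delta}{4\pi^{2}}\|f\|^{2}_{L^{2}(\mathbb{R}^{2},\mathbb{H})}\|\phi\|^{2}_{L^{2}(\mathbb{R}^{2},\mathbb{H})},
\]
and clear the common factor $\tfrac{1}{4\pi^{2}}$ from both sides. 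This yields exactly the logarithmic uncertainty principle for the QWFT as stated in \cite{40}, completing the corollary.

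There is essentially no obstacle here: the proof is a bookkeeping exercise in which the only mildly subtle step is checking that the constants $\tfrac{1}{2\pi\sqrt{|b_1 b_2|}}$ coming from the kernels combine with the $\tfrac{1}{4\pi^{2}}$ prefactor in the theorem to cancel uniformly, and that the $\ln|\mathbf{b}|$ correction term vanishes. Both are transparent once $|\mathbf{b}|=1$ is substituted, so no additional analytic ingredient beyond the Theorem itself is needed.
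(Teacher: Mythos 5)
Your proposal is correct and is exactly the routine specialization the paper leaves implicit (the corollary is stated without proof): with $a_i=d_i=0$, $b_i=1$ the kernels reduce to $\tfrac{1}{\sqrt{2\pi}}e^{-\mathbf{i}\pi/4}e^{-\mathbf{i}x_1\omega_1}$ and $\tfrac{1}{\sqrt{2\pi}}e^{-\mathbf{j}\pi/4}e^{-\mathbf{j}x_2\omega_2}$, the unimodular factors drop out of the modulus, $\ln|\mathbf{b}|=0$, and the $\tfrac{1}{4\pi^{2}}$ cancels throughout. Your bookkeeping of the constants is consistent with how the paper handles the analogous QWFT specialization of Pitt's inequality, so nothing is missing.
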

\begin{corollary}
Suppose that $\phi=1$ and $A_1=A_2=\begin{bmatrix}
	0&1\\
	-1&0
	\end{bmatrix}$,
the Logarithmic uncertainty principle of the QWLCT becomes the Lemma \ref{LQFT}.
\end{corollary}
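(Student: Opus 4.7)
The plan is a direct substitution: plug the special matrix and window choices into the QWLCT Logarithmic uncertainty principle and verify that the resulting inequality is exactly Lemma \ref{LQFT} after cancelling a common normalizing factor.

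First I would evaluate the constants. With $A_{1}=A_{2}=\begin{bmatrix}0&1\\-1&0\end{bmatrix}$ we read off $a_{i}=0$, $b_{i}=1$, $c_{i}=-1$, $d_{i}=0$, so $\mathbf{b}=(1,1)$, $|\mathbf{b}|=1$ and $\ln|\mathbf{b}|=0$. The kernels (12)--(13) collapse to $K_{A_{1}}^{\mathbf{i}}(x_{1},\omega_{1})=\frac{1}{\sqrt{2\pi}}e^{-\mathbf{i}(x_{1}\omega_{1}+\pi/4)}$ and $K_{A_{2}}^{\mathbf{j}}(x_{2},\omega_{2})=\frac{1}{\sqrt{2\pi}}e^{-\mathbf{j}(x_{2}\omega_{2}+\pi/4)}$, so the two-sided QLCT reduces to the QFT up to multiplicative unit quaternions and a scalar factor $\frac{1}{2\pi}$; in particular $|\mathcal{L}_{A_{1},A_{2}}\{f\}(\mathbf{w})|^{2}=\frac{1}{4\pi^{2}}|F_{Q}(f)(\mathbf{w})|^{2}$.

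Next, with $\phi\equiv 1$ we have $\overline{\phi(\mathbf{x}-\mathbf{u})}=1$, so by (16) the QWLCT collapses to the QLCT and is independent of $\mathbf{u}$, giving $|G_{\phi}^{A_{1},A_{2}}\{f\}(\mathbf{w,u})|^{2}=\frac{1}{4\pi^{2}}|F_{Q}(f)(\mathbf{w})|^{2}$. Substituting this together with $\ln|\mathbf{b}|=0$ into the parent theorem, the double integral factors because the integrand is $\mathbf{u}$-independent: the $\mathbf{u}$-integration yields $\int_{\mathbb{R}^{2}}d\mathbf{u}=\|\phi\|^{2}_{L^{2}(\mathbb{R}^{2},\mathbb{H})}$, and the remaining $\mathbf{w}$-integral carries the prefactor $\frac{1}{4\pi^{2}}$. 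Hence both sides of the QWLCT inequality share the common factor $\frac{\|\phi\|^{2}_{L^{2}(\mathbb{R}^{2},\mathbb{H})}}{4\pi^{2}}$; cancelling it delivers
\begin{equation*}
\int_{\mathbb{R}^{2}}\ln|\mathbf{x}|\,|f(\mathbf{x})|^{2}\,d\mathbf{x}+\int_{\mathbb{R}^{2}}\ln|\mathbf{w}|\,|F_{Q}(f)(\mathbf{w})|^{2}\,d\mathbf{w}\geq \Delta\int_{\mathbb{R}^{2}}|f(\mathbf{x})|^{2}\,d\mathbf{x},
\end{equation*}
which is exactly Lemma \ref{LQFT}.

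The delicate point is that $\phi\equiv 1\notin L^{2}(\mathbb{R}^{2},\mathbb{H})$, so the factor $\|\phi\|^{2}$ is formally infinite. I would handle this either by reading the reduction as a formal cancellation of the common divergent prefactor (the interpretation adopted in the analogous corollaries of the Pitt and Lieb sections of the paper), or, if full rigour is required, by a limiting argument with a sequence $\phi_{n}\in L^{2}\cap S(\mathbb{R}^{2},\mathbb{H})$ that tends to $1$ on expanding compact sets: apply the QWLCT Logarithmic principle to each $\phi_{n}$, divide both sides by $\|\phi_{n}\|^{2}/(4\pi^{2})$, and let $n\to\infty$ using dominated convergence for Schwartz-class $f$.
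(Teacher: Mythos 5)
The paper states this corollary without proof, and your direct-substitution argument is exactly the intended verification: with $a_i=0$, $b_i=1$, $d_i=0$ the QWLCT with $\phi\equiv 1$ reduces to $\frac{1}{2\pi}$ times the QFT (up to unimodular factors), $\ln|\mathbf{b}|=0$ under the paper's convention $|\mathbf{b}|=|b_1b_2|$, and cancelling the common prefactor $\|\phi\|^2/(4\pi^2)$ recovers Lemma~4. Your proof is correct at the level of rigour the paper operates at, and you rightly flag the one genuine subtlety --- that $\phi\equiv 1\notin L^2(\mathbb{R}^2,\mathbb{H})$, so the cancellation is formal unless backed by the limiting argument you sketch --- a point the paper itself silently glosses over.
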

\subsection{Entropic uncertainty principle}
In this subsection, we present Entropic uncertainty principle for the QWLCT.
Entropic uncertainty principle is very important in signal processing and harmonic analysis. Its locality is measured by
Shannon entropy. The entropy represents an advantageous way to measure the decay of a function.
\begin{definition}
The Shannon entropy of $p$ is denoted as
\begin{align} \label{ed}
		\begin{split}
E(p)=-\int_{\mathbb{R}^2}\int_{\mathbb{R}^2}\ln(p(\mathbf{w,u}))p(\mathbf{w,u})\rm{d}\mathbf{w}\rm{d}\mathbf{u},
		\end{split}
	\end{align}
where $p$ is a probability density function on $\mathbb{R}^2\times \mathbb{R}^2$.
\end{definition}
\begin{theorem}
Let $\phi$ be a quaternion window function and $f\in L^2(\mathbb{R}^2,\mathbb{H})$ such that $f, \phi\neq0$ and $|b_{1}b_{2}|\geq\frac{1}{4\pi^{2}}$. Then
\begin{align} \label{EUP}
		\begin{split}
&E(|G_{\phi}^{A_1,A_2}\{f\}|^{2})\\
&\geq
\frac{1}{2\pi^{2}}(\ln(2)-\ln(2\pi)-\ln(|b_{1}b_{2}|))\|f\|^{2}_{L^2(\mathbb{R}^2,\mathbb{H})}\|\phi\|^{2}_{L^2(\mathbb{R}^2,\mathbb{H})}\\&
-\|f\|^{2}_{L^2(\mathbb{R}^2,\mathbb{H})}\|\phi\|^{2}_{L^2(\mathbb{R}^2,\mathbb{H})}\ln(\|f\|^{2}_{L^2(\mathbb{R}^2,\mathbb{H})}\|\phi\|^{2}_{L^2(\mathbb{R}^2,\mathbb{H})}).
		\end{split}
	\end{align}
\end{theorem}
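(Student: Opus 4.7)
The plan is to derive the entropic bound by differentiating the map $s\mapsto F(s):=\|G\|_{L^s(\mathbb{R}^4,\mathbb{H})}^s$ at the base point $s=2$. Setting $A:=\|f\|_{L^2(\mathbb{R}^2,\mathbb{H})}^2\|\phi\|_{L^2(\mathbb{R}^2,\mathbb{H})}^2$ and $G:=G^{A_1,A_2}_\phi f$, Parseval's relation~\eqref{fsxd} gives $F(2)=A$, while a direct differentiation under the integral yields
\[
F'(2)=\int\!\!\int_{\mathbb{R}^{4}}|G|^{2}\ln|G|\,\mathrm{d}\mathbf{w}\,\mathrm{d}\mathbf{u}=-\tfrac12 E(|G|^{2}).
\]
Thus any smooth upper envelope $\Psi(s)\ge F(s)$ with $\Psi(2)=F(2)$ produces, by one-sided derivative comparison at $s=2$, the bound $E(|G|^2)\ge-2\Psi'(2)$.

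A first candidate envelope comes from the two-norm interpolation $\|G\|_{L^s}^{s}\le\|G\|_{L^2}^{2}\|G\|_{L^\infty}^{s-2}$ for $s\ge2$ together with the boundedness estimate~\eqref{wq}, which provides $\|G\|_{L^\infty}\le\tfrac{1}{2\pi\sqrt{|b_{1}b_{2}|}}\sqrt{A}$. This yields a tangent envelope $\Psi(s)=A^{s/2}(4\pi^{2}|b_{1}b_{2}|)^{-(s-2)/2}$ with $\Psi(2)=A$, and differentiation gives $\Psi'(2)=\tfrac{A}{2}(\ln A-\ln(4\pi^{2}|b_{1}b_{2}|))$. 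Feeding this into $E(|G|^2)\ge-2\Psi'(2)$ already produces an entropic lower bound of the required shape, with first summand proportional to $A$ and second summand $-A\ln A$.

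To sharpen the first coefficient to the form $\tfrac{1}{2\pi^{2}}(\ln 2-\ln(2\pi)-\ln|b_{1}b_{2}|)$ stated in the theorem, I would interpose the Lieb inequality~\eqref{Li}: the factor $\tfrac{1}{2\pi^{2}}$ is precisely $\tfrac{1}{2}M(2)^{2}$ with $M(s):=\tfrac{|\mathbf{b}|^{1/s-1/2}}{2\pi}D_{s,s'}$, and the three logarithms assemble from $\ln D_{2,2}=\ln 2$, the constant $1/(2\pi)$ in $M$, and the Jacobian $|b_{1}b_{2}|$ of the rescaling $\mathbf{w}\mapsto\mathbf{w}/\mathbf{b}$ that mediates the QWLCT-to-QFT identity~\eqref{WF}. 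The final reduction is cosmetic: since $p:=|G|^{2}/A$ is a probability density on $\mathbb{R}^{4}$ by~\eqref{fsxd}, the rewriting $E(|G|^{2})=A\,E(p)-A\ln A$ cleanly separates the two summands in the claim.

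The main obstacle I anticipate is that the Lieb envelope $\widetilde\Psi(s):=M(s)^{s}A^{s/2}$ is not actually tangent to $F$ at $s=2$---one computes $\widetilde\Psi(2)=A/\pi^{2}<A=F(2)$---so the naive derivative comparison is invalid when applied to $\widetilde\Psi$ alone. The argument must therefore proceed through a R\'enyi-to-Shannon entropy limit: writing $E(p)\ge H_{s/2}(p)=\tfrac{2}{2-s}\ln\!\int\!\!\int p^{s/2}$ for $s>2$, substituting the Lieb bound $\int\!\!\int p^{s/2}\le M(s)^{s}$, and passing to $s\to 2^{+}$ via L'H\^opital's rule to resolve the indeterminate form. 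Verifying that this limit produces exactly the constant $\tfrac{1}{2\pi^{2}}(\ln 2-\ln(2\pi)-\ln|b_{1}b_{2}|)$---and reconciling it with the crude tangent envelope obtained in paragraph~two---is the computational heart of the proof; the hypothesis $|b_{1}b_{2}|\ge 1/(4\pi^{2})$ presumably enters precisely to validate this limiting procedure, i.e.\ to keep $\|G\|_{L^\infty}^{2}\le A$ so that the $L^\infty$--$L^{2}$ interpolation and the R\'enyi comparison remain compatible.
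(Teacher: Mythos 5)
Your strategy is essentially the paper's own: after normalizing $\|f\|_{L^2}=\|\phi\|_{L^2}=1$, the paper computes $\bigl(\tfrac{\mathrm{d}}{\mathrm{d}z}\int\!\!\int|G|^{z}\bigr)_{z=2^{+}}=-\tfrac12E(|G|^{2})$ by dominated convergence (this is where the hypothesis $|b_{1}b_{2}|\ge\tfrac{1}{4\pi^{2}}$ enters, exactly as you guessed, to force $|G|\le1$ so that $\ln|G|\le0$), then compares with the Lieb envelope by setting $\Phi(s)=\int\!\!\int|G|^{s}-M(s)^{s}$ with $M(s)=\tfrac{|\mathbf{b}|^{1/s-1/2}}{2\pi}D_{s,s'}$, asserting $\bigl(\tfrac{\mathrm{d}\Phi}{\mathrm{d}z}\bigr)_{z=2^{+}}\le0$, and reading off the constant from $\tfrac{\mathrm{d}}{\mathrm{d}z}(M(z)^{z})_{z=2^{+}}=\tfrac{1}{4\pi^{2}}(\ln(2\pi)+\ln|b_{1}b_{2}|-\ln2)$; your Step-3-style renormalization $E(|G|^{2})=A\,E(p)-A\ln A$ is also the paper's final step. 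The obstacle you flag is real, and it is precisely the weak point of the paper's argument: since $M(2)=\tfrac{1}{2\pi}D_{2,2}=\tfrac{1}{\pi}$, the envelope is not tangent at $s=2$; the paper itself records $\Phi(2)=1-\tfrac{1}{\pi^{2}}>0$ while simultaneously claiming $\Phi(s)\le0$ for all $s\ge2$, and infers the sign of the right derivative from this contradictory pair. (Indeed, Lieb's inequality \eqref{Li} with this constant at $s=2$ directly contradicts Parseval's identity \eqref{fsxd}, so the stated constant cannot be valid in a neighbourhood of $s=2$.) So you have correctly diagnosed the gap.

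Your proposed repair, however, does not close it. The R\'enyi chain gives $E(p)\ge H_{s/2}(p)\ge\tfrac{2s}{2-s}\ln M(s)$ for $s>2$, and because $\ln M(2)=-\ln\pi\ne0$ the factor $\tfrac{2s}{2-s}\to-\infty$ makes this lower bound diverge to $+\infty$ as $s\to2^{+}$; L'H\^opital is inapplicable because the numerator does not vanish at $s=2$. The limit therefore does not produce $\tfrac{1}{2\pi^{2}}(\ln2-\ln(2\pi)-\ln|b_{1}b_{2}|)$ -- it produces nothing finite, which is just another symptom of the non-tangency. The stated constant only emerges from the formal differentiation of $M(s)^{s}$ at $s=2$, and legitimizing that step requires a Lieb inequality whose constant equals $1$ at $s=2$ (tangent to Parseval, as in the classical short-time Fourier transform case), not an entropy-limit argument built on the inequality as stated. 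Note finally that your first envelope, the $L^{2}$--$L^{\infty}$ interpolation $\Psi(s)=A^{s/2}(4\pi^{2}|b_{1}b_{2}|)^{-(s-2)/2}$, \emph{is} tangent at $s=2$ and does yield a rigorous bound $E(|G|^{2})\ge A\ln(4\pi^{2}|b_{1}b_{2}|)-A\ln A$, but its first coefficient differs from the theorem's, so it proves a genuine entropic inequality rather than the one asserted.
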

\begin{proof}
Step 1: Suppose that $\|f\|_{L^2(\mathbb{R}^2,\mathbb{H})}=\|\phi\|_{L^2(\mathbb{R}^2,\mathbb{H})}=1$,
according to \eqref{wq}, we have
\begin{align} \label{bj}
		\begin{split}
&0\leq|G_{\phi}^{A_1,A_2}\{f\}(\mathbf{w,u})|\leq \|G_{\phi}^{A_1,A_2}\{f\}\|_{L^\infty(\mathbb{R}^4,\mathbb{H})}\leq \\&\frac{1}{2\pi\sqrt{|b_{1}b_{2}|}} \|f\|_{L^2(\mathbb{R}^2,\mathbb{H})}\|\phi\|_{L^2(\mathbb{R}^2,\mathbb{H})}\leq1,
		\end{split}
	\end{align}
then $\ln(|G_{\phi}^{A_1,A_2}\{f\}|)\leq0$ and $E(|G_{\phi}^{A_1,A_2}\{f\}|)\geq0$.

i. If the entropy $E(|G_{\phi}^{A_1,A_2}\{f\}|)=+\infty$, then the \eqref{EUP} hold.

ii. Now assume that the entropy $E(|G_{\phi}^{A_1,A_2}\{f\}|)<+\infty$.
Let $\Theta_{y}(z)=\frac{y^{z}-y^{2}}{z-2}$ be the function defined on $(2,3]$
and $0<y<1$, then for $\forall \ 2 < z\leq3$, we have
\begin{align} \label{ds}
		\begin{split}
\frac{\rm{d}\Theta_{y}}{\rm{d}z}(z)=\frac{(z-2)y^{z}\ln(y)-y^{z}+y^{2}}{(z-2)^{2}}.
		\end{split}
	\end{align}
Let $\hbar_{y}(z)=(z-2)y^{z}\ln(y)-y^{z}+y^{2}$, then for every $0<y<1$,
the function $\hbar_{y}$ is differentiable on $\mathbb{R}$, hence
\begin{align} \label{dds}
		\begin{split}
\frac{\rm{d}\hbar_{y}}{\rm{d}z}(z)=(z-2)(\ln(y))^{2}y^{z}.
		\end{split}
	\end{align}
We have that for all $0<y<1$, $\frac{\rm{d}\hbar_{y}}{\rm{d}z}(z)$ is positive on $(2,3]$,
so $\hbar_{y}$ is increasing on $(2,3]$.

In addition, for all $0<y<1$, $\lim_{z\rightarrow 2^{+}}\hbar_{y}(z)=\hbar_{y}(2)=0$, then $\hbar_{y}$ is positive which implies that $\frac{\rm{d}\Theta_{y}}{\rm{d}z}(z)$
is positive on $(2,3]$, hence $\Theta_{y}$ is increasing on $(2,3]$. Therefore $\forall \ 2 < z\leq3$, we have
\begin{align}
		\begin{split}
y^{2}\ln(y)=\lim_{z\rightarrow 2^{+}}\frac{y^{z}-y^{2}}{z-2}\leq \hbar_{y}(z),
		\end{split}
	\end{align}
that is to say $\forall \ 2 < z\leq3$,
\begin{align} \label{dsgs}
		\begin{split}
0\leq\frac{y^{2}-y^{z}}{z-2}\leq -y^{2}\ln(y).
		\end{split}
	\end{align}
When $y=0$ and $y=1$, the inequality \eqref{dsgs} holds. So for every $0\leq y\leq 1$, we have the inequality \eqref{dsgs}.

Step 2: According to \eqref{bj}, we know that for every $(\mathbf{w,u})\in \mathbb{R}^2\times \mathbb{R}^2$, $0\leq|G_{\phi}^{A_1,A_2}\{f\}(\mathbf{w,u})|\leq1$,
then for every $2 < z\leq3$, we obtain
\begin{align} \label{WDSGS}
		\begin{split}
0&\leq\frac{|G_{\phi}^{A_1,A_2}\{f\}(\mathbf{w,u})|^{2}-|G_{\phi}^{A_1,A_2}\{f\}(\mathbf{w,u})|^{z}}{z-2}\leq \\& -|G_{\phi}^{A_1,A_2}\{f\}(\mathbf{w,u})|^{2}\ln(|G_{\phi}^{A_1,A_2}\{f\}(\mathbf{w,u})|).
		\end{split}
	\end{align}
Let $\Phi$ be defined on $[2,+\infty)$ by
\begin{align} \label{xdhs}
		\begin{split}
\Phi(s)=\left( \int_{\mathbb{R}^2}\int_{\mathbb{R}^2}|G_{\phi}^{A_1,A_2}\{f\}(\mathbf{w,u})|^{s}\rm{d}\mathbf{w}\rm{d}\mathbf{u} \right)-
\left(\frac{|\mathbf{b}|^{\frac{1}{s}-\frac{1}{2}}}{2\pi}D_{s,s'}\right)^{s}.
		\end{split}
	\end{align}
From Lieb inequality \eqref{Li}, we know that $\Phi(s)\leq0$ for every $2\leq s<+\infty$ and using \eqref{fsxd}, we obtain
\begin{align}
		\begin{split}
\Phi(2)=1-\frac{1}{\pi^{2}}>0.
		\end{split}
	\end{align}
Hence $\left(\frac{\rm{d}\Phi}{\rm{d}s} \right)_{s=2^{+}}<0$.

If for every $2 < z\leq3$ and $(\mathbf{w,u})\in \mathbb{R}^2\times \mathbb{R}^2$, then
\begin{align*}
		\begin{split}
&\Big|\frac{|G_{\phi}^{A_1,A_2}\{f\}(\mathbf{w,u})|^{z}-|G_{\phi}^{A_1,A_2}\{f\}(\mathbf{w,u})|^{2}}{z-2}\Big|\leq \\& -|G_{\phi}^{A_1,A_2}\{f\}(\mathbf{w,u})|^{2}\ln(|G_{\phi}^{A_1,A_2}\{f\}(\mathbf{w,u})|).
		\end{split}
	\end{align*}
Then
\begin{align*}
		\begin{split}
&\int_{\mathbb{R}^2}\int_{\mathbb{R}^2}\Big|\frac{|G_{\phi}^{A_1,A_2}\{f\}(\mathbf{w,u})|^{z}-|G_{\phi}^{A_1,A_2}\{f\}(\mathbf{w,u})|^{2}}{z-2}\Big|\rm{d}\mathbf{w}\rm{d}\mathbf{u}
\leq \\& -\int_{\mathbb{R}^2}\int_{\mathbb{R}^2}|G_{\phi}^{A_1,A_2}\{f\}(\mathbf{w,u})|^{2}\ln(|G_{\phi}^{A_1,A_2}\{f\}(\mathbf{w,u})|)\rm{d}\mathbf{w}\rm{d}\mathbf{u}\\
&=\frac{1}{2}E(|G_{\phi}^{A_1,A_2}\{f\}(\mathbf{w,u})|^{2})<+\infty.
		\end{split}
	\end{align*}
If for every $3 \leq z<+\infty$ and $(\mathbf{w,u})\in \mathbb{R}^2\times \mathbb{R}^2$, then
\begin{align*}
		\begin{split}
&\Big|\frac{|G_{\phi}^{A_1,A_2}\{f\}(\mathbf{w,u})|^{z}-|G_{\phi}^{A_1,A_2}\{f\}(\mathbf{w,u})|^{2}}{z-2}\Big|\leq \\& 2|G_{\phi}^{A_1,A_2}\{f\}(\mathbf{w,u})|^{2},
		\end{split}
	\end{align*}
and
\begin{align*}
		\begin{split}
&\int_{\mathbb{R}^2}\int_{\mathbb{R}^2}\Big|\frac{|G_{\phi}^{A_1,A_2}\{f\}(\mathbf{w,u})|^{z}-|G_{\phi}^{A_1,A_2}\{f\}(\mathbf{w,u})|^{2}}{z-2}\Big|\rm{d}\mathbf{w}\rm{d}\mathbf{u}
\\&\leq 2\|G_{\phi}^{A_1,A_2}\{\textit{f}\}\|_{L^2(\mathbb{R}^4,\mathbb{H})}<+\infty.
		\end{split}
	\end{align*}
Therefore for every $2 \leq z<+\infty$ and $(\mathbf{w,u})\in \mathbb{R}^2\times \mathbb{R}^2$, we have
\begin{align}
		\begin{split}
\int_{\mathbb{R}^2}\int_{\mathbb{R}^2}\Big|\frac{|G_{\phi}^{A_1,A_2}\{f\}(\mathbf{w,u})|^{z}-|G_{\phi}^{A_1,A_2}\{f\}(\mathbf{w,u})|^{2}}{z-2}\Big|\rm{d}\mathbf{w}\rm{d}\mathbf{u}
<+\infty.
		\end{split}
	\end{align}
According to \eqref{WDSGS}, We obtain
\begin{align*}
		\begin{split}
&\Big(\frac{\rm{d}}{\rm{d}z}\int_{\mathbb{R}^2}\int_{\mathbb{R}^2}|G_{\phi}^{A_1,A_2}\{f\}(\mathbf{w,u})|^{z}\rm{d}\mathbf{w}\rm{d}\mathbf{u}\Big)_{z=2^{+}}\\
&=\lim_{z \rightarrow 2^{+}}\int_{\mathbb{R}^2}\int_{\mathbb{R}^2}\frac{|G_{\phi}^{A_1,A_2}\{f\}(\mathbf{w,u})|^{z}-|G_{\phi}^{A_1,A_2}\{f\}(\mathbf{w,u})|^{2}}{z-2}
\rm{d}\mathbf{w}\rm{d}\mathbf{u}\\&
= \int_{\mathbb{R}^2}\int_{\mathbb{R}^2}\lim_{z \rightarrow 2^{+}}\frac{|G_{\phi}^{A_1,A_2}\{f\}(\mathbf{w,u})|^{z}-|G_{\phi}^{A_1,A_2}\{f\}(\mathbf{w,u})|^{2}}{z-2}
\rm{d}\mathbf{w}\rm{d}\mathbf{u}\\&=
\frac{1}{2}\int_{\mathbb{R}^2}\int_{\mathbb{R}^2}|G_{\phi}^{A_1,A_2}\{f\}(\mathbf{w,u})|^{2}\ln(|G_{\phi}^{A_1,A_2}\{f\}(\mathbf{w,u})|^{2})\rm{d}\mathbf{w}\rm{d}\mathbf{u}\\
&=-\frac{1}{2}E(|G_{\phi}^{A_1,A_2}\{f\}(\mathbf{w,u})|^{2}).
		\end{split}
	\end{align*}
Moreover, by \eqref{xdhs}, we have
\begin{align*}
		\begin{split}
\Big(\frac{\rm{d}\Phi}{\rm{d}z}\Big)_{z=2^{+}}=-\frac{1}{2}E(|G_{\phi}^{A_1,A_2}\{f\}(\mathbf{w,u})|^{2})-
\left(\frac{\rm{d}}{\rm{d}z}\left(\frac{|\mathbf{b}|^{\frac{1}{z}-\frac{1}{2}}}{2\pi}D_{z,z'}\right)^{z}\right)_{z=2^{+}},
		\end{split}
	\end{align*}
where $D_{z,z'}=\left( \frac{4}{z}\right)^{\frac{1}{z}}\left( \frac{4}{z'}\right)^{\frac{1}{z'}}$ and $\frac{1}{z}+\frac{1}{z'}=1$.

On the other hand,
\begin{align*}
		\begin{split}
\left(\frac{\rm{d}}{\rm{d}z}\left(\frac{|\mathbf{b}|^{\frac{1}{z}-\frac{1}{2}}}{2\pi}D_{z,z'}\right)^{z}\right)_{z=2^{+}}=\frac{1}{4\pi^{2}}(\ln(2\pi)+\ln(|b_{1}b_{2}|)-\ln(2)).
		\end{split}
	\end{align*}
Hence
\begin{align*}
		\begin{split}
&\Big(\frac{\rm{d}\Phi}{\rm{d}z}\Big)_{z=2^{+}}\\&=-\frac{1}{2}E(|G_{\phi}^{A_1,A_2}\{f\}(\mathbf{w,u})|^{2})+
\frac{1}{4\pi^{2}}(\ln(2)-\ln(2\pi)-\ln(|b_{1}b_{2}|))\leq0,
		\end{split}
	\end{align*}
we obtain
\begin{align*}
		\begin{split}
E(|G_{\phi}^{A_1,A_2}\{f\}(\mathbf{w,u})|^{2})\geq
\frac{1}{2\pi^{2}}(\ln(2)-\ln(2\pi)-\ln(|b_{1}b_{2}|)).
		\end{split}
	\end{align*}
Step 3: For generic $f, \phi\neq0$, Let $\zeta=\frac{f}{\|f\|_{L^2(\mathbb{R}^2,\mathbb{H})}}$ and $\vartheta=\frac{\phi}{\|\phi\|_{L^2(\mathbb{R}^2,\mathbb{H})}}$, such that
$\|\zeta\|_{L^2(\mathbb{R}^2,\mathbb{H})}=\|\vartheta\|_{L^2(\mathbb{R}^2,\mathbb{H})}=1$ and $E(|G_{\vartheta}^{A_1,A_2}\{\zeta\}(\mathbf{w,u})|^{2})\geq
\frac{1}{2\pi^{2}}(\ln(2)-\ln(2\pi)-\ln(|b_{1}b_{2}|))$.

According to the definition of the QWLCT, we know that
\begin{align*}
		\begin{split}
G_{\vartheta}^{A_1,A_2}\{\zeta\}(\mathbf{w,u})=\frac{G_{\phi}^{A_1,A_2}\{f\}(\mathbf{w,u})}{\|f\|_{L^2(\mathbb{R}^2,\mathbb{H})}\|\phi\|_{L^2(\mathbb{R}^2,\mathbb{H})}}.
		\end{split}
	\end{align*}
Hence
\begin{align*}
		\begin{split}
E(|G_{\vartheta}^{A_1,A_2}\{\zeta\}|^{2})&=-\int_{\mathbb{R}^2}\int_{\mathbb{R}^2}(\ln(|G_{\phi}^{A_1,A_2}\{f\}(\mathbf{w,u})|^{2})\\&-
\ln(\|f\|^{2}_{L^2(\mathbb{R}^2,\mathbb{H})}\|\phi\|^{2}_{L^2(\mathbb{R}^2,\mathbb{H})}))\frac{|G_{\phi}^{A_1,A_2}\{f\}(\mathbf{w,u})|^{2}}
{\|f\|^{2}_{L^2(\mathbb{R}^2,\mathbb{H})}\|\phi\|^{2}_{L^2(\mathbb{R}^2,\mathbb{H})})}\rm{d}\mathbf{w}\rm{d}\mathbf{u}\\
&=\frac{E(|G_{\phi}^{A_1,A_2}\{f\}|^{2})}{\|f\|^{2}_{L^2(\mathbb{R}^2,\mathbb{H})}\|\phi\|^{2}_{L^2(\mathbb{R}^2,\mathbb{H})})}
+\ln(\|f\|^{2}_{L^2(\mathbb{R}^2,\mathbb{H})}\|\phi\|^{2}_{L^2(\mathbb{R}^2,\mathbb{H})})\\
&\geq
\frac{1}{2\pi^{2}}(\ln(2)-\ln(2\pi)-\ln(|b_{1}b_{2}|)),
		\end{split}
	\end{align*}
consequently
\begin{align*}
		\begin{split}
&E(|G_{\phi}^{A_1,A_2}\{f\}|^{2})\\
&\geq
\frac{1}{2\pi^{2}}(\ln(2)-\ln(2\pi)-\ln(|b_{1}b_{2}|))\|f\|^{2}_{L^2(\mathbb{R}^2,\mathbb{H})}\|\phi\|^{2}_{L^2(\mathbb{R}^2,\mathbb{H})}\\&
-\|f\|^{2}_{L^2(\mathbb{R}^2,\mathbb{H})}\|\phi\|^{2}_{L^2(\mathbb{R}^2,\mathbb{H})}\ln(\|f\|^{2}_{L^2(\mathbb{R}^2,\mathbb{H})}\|\phi\|^{2}_{L^2(\mathbb{R}^2,\mathbb{H})}).
		\end{split}
	\end{align*}
\end{proof}
\begin{corollary}
While $A_1=A_2=\begin{bmatrix}
	0&1\\
	-1&0
	\end{bmatrix}$,
the Entropic uncertainty principle of the QWLCT becomes the  Entropic uncertainty principle of the QWFT \cite{41}.
\end{corollary}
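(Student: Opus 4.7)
The plan is to treat this corollary as a direct specialization of the main entropic theorem: substitute $A_1=A_2=\left[\begin{smallmatrix}0&1\\-1&0\end{smallmatrix}\right]$ into the parameters and verify that the QWLCT entropic inequality collapses onto the QWFT entropic inequality from~[41]. I would not reprove the uncertainty statement from scratch; instead I would show that the substitution converts every piece of \eqref{EUP} into its QWFT counterpart.

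First I would read off the parameters $a_i=d_i=0$, $b_i=1$, $c_i=-1$ for $i=1,2$ and insert them into the kernels (12)--(13). Each reduces to $K_{A_i}(x_i,\omega_i)=\tfrac{1}{\sqrt{2\pi}}e^{-\mathbf{i}\pi/4}e^{-\mathbf{i}x_i\omega_i}$ (with $\mathbf{j}$ on the right), and because the phase factors $e^{-\mathbf{i}\pi/4}$, $e^{-\mathbf{j}\pi/4}$ lie in the same imaginary planes as the inner exponentials they pull out of the integral on the appropriate side. Hence
\begin{equation*}
G^{A_1,A_2}_{\phi}\{f\}(\mathbf{w,u})=\tfrac{1}{2\pi}e^{-\mathbf{i}\pi/4}\,G_{\phi}\{f\}(\mathbf{w,u})\,e^{-\mathbf{j}\pi/4},
\end{equation*}
so taking moduli gives the pointwise identity $|G^{A_1,A_2}_{\phi}\{f\}(\mathbf{w,u})|^{2}=\tfrac{1}{4\pi^{2}}|G_{\phi}\{f\}(\mathbf{w,u})|^{2}$. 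At the same time $|b_1b_2|=1$ kills the $\ln|b_1b_2|$ term on the right-hand side of \eqref{EUP}.

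Next I would substitute both of these simplifications into \eqref{EUP}. On the left, a short computation using the scaling identity $E(c\rho)=cE(\rho)-c\ln(c)\int\rho\,d\mathbf{w}d\mathbf{u}$ together with the QWLCT Parseval relation \eqref{fsxd} gives
\begin{equation*}
E\bigl(|G^{A_1,A_2}_{\phi}\{f\}|^{2}\bigr)=\tfrac{1}{4\pi^{2}}E\bigl(|G_{\phi}\{f\}|^{2}\bigr)+\ln(4\pi^{2})\,\|f\|^{2}_{L^{2}}\|\phi\|^{2}_{L^{2}},
\end{equation*}
since $\iint|G_{\phi}\{f\}|^{2}=4\pi^{2}\|f\|^{2}\|\phi\|^{2}$ by the same Parseval identity combined with the pointwise factor $1/(4\pi^{2})$. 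Plugging this into \eqref{EUP} with $\ln|b_1b_2|=0$ and rearranging converts the QWLCT bound into the lower bound on $E(|G_{\phi}\{f\}|^{2})$ stated for the QWFT in~[41].

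The only real obstacle is bookkeeping, not analysis: one must be careful with the non-commutativity of $e^{-\mathbf{i}\pi/4}$ and $e^{-\mathbf{j}\pi/4}$ (they cannot be combined into a single scalar, but they have modulus one and therefore disappear in $|\cdot|^{2}$), and one must track the constant introduced by the $1/(4\pi^{2})$ rescaling through the $p\ln p$ integrand so that it lines up with whichever normalization for the two-sided QWFT is adopted in~[41]. Once that normalization is matched, the corollary follows without further work.
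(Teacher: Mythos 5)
Your proposal is correct and is exactly the routine specialization the paper intends: the paper states this corollary without proof, and your substitution $a_i=d_i=0$, $b_i=1$, the resulting identity $|G^{A_1,A_2}_{\phi}\{f\}|^{2}=\tfrac{1}{4\pi^{2}}|G_{\phi}\{f\}|^{2}$, and the entropy rescaling $E(c\rho)=cE(\rho)-c\ln(c)\int\!\!\int\rho$ are precisely the steps needed. Your closing caveat about matching the normalization of the QWFT in [41] is well placed, and your unnormalized convention is consistent with how the paper handles the analogous Pitt's-inequality corollary.
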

\subsection{Lieb uncertainty principle}
In this subsection, we prove Lieb uncertainty principle of the QWLCT based on the following definition and Lieb inequality.
\begin{definition}
A function $f\in L^2(\mathbb{R}^2,\mathbb{H})$ be $\epsilon-$concentrated on a measurable set $\Omega\subseteq\mathbb{R}^2$,
where $\Omega^{c}=\mathbb{R}^2\ \Omega$, if
\begin{align}
		\begin{split}
\|\chi_{\Omega^{c}}\textit{f}\|_{L^2(\mathbb{R}^2,\mathbb{H})}=\left(\int_{\Omega^{c}}|\textit{f}(\mathbf{x})|^{2}\rm{d}\mathbf{x}\right)^{\frac{1}{2}}\leq \varepsilon
\|f\|_{L^2(\mathbb{R}^2,\mathbb{H})},
		\end{split}
	\end{align}
where $\chi_{\Omega}$ denote the characteristic function of $\Omega$. If $0\leq\epsilon\leq\frac{1}{2}$, then the most of energy is concentrated on $\Omega$ and $\Omega$ can be called the
essential support of $f$. If $\epsilon=0$, then $\Omega$ contains the support of $f$.
\end{definition}

\begin{theorem} [Lieb uncertainty principle for the QWLCT]
Let $\phi$ be a quaternion window function and $f\in L^2(\mathbb{R}^2,\mathbb{H})$ such that $f\neq0$.
Let $\Omega$ be a measurable set of $\mathbb{R}^2\times \mathbb{R}^2$ and $\epsilon\geq0$.
If $G_{\phi}^{A_1,A_2}(f)$ is $\epsilon-$concentrated on $\Omega$, then for every $s>2$, we have
\begin{align}
		\begin{split}
|\mathbf{b}|(1-\varepsilon^{2})^{\frac{s}{s-2}}\left(\frac{D_{s,s'}}{2\pi}\right)^{\frac{2s}{2-s}}
&\leq |\Omega|,
		\end{split}
	\end{align}
where $|\Omega|$ is the normalized Lebesgue measure of the set $\Omega$, $D_{s,s'}=\left( \frac{4}{s}\right)^{\frac{1}{s}}\left( \frac{4}{s'}\right)^{\frac{1}{s'}}$ and $\frac{1}{s}+\frac{1}{s'}=1$.
\end{theorem}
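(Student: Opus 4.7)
The plan is to combine the $\epsilon$-concentration hypothesis with Hölder's inequality and the Lieb inequality (33) established just above, in the spirit of the classical Lieb-type uncertainty argument for the windowed Fourier transform.

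First, I would translate the hypothesis $\|\chi_{\Omega^c}G_{\phi}^{A_1,A_2}\{f\}\|_{L^2(\mathbb{R}^4,\mathbb{H})}\leq \varepsilon\,\|G_{\phi}^{A_1,A_2}\{f\}\|_{L^2(\mathbb{R}^4,\mathbb{H})}$ into an estimate from below on the localized mass,
\begin{align*}
\int_{\Omega}|G_{\phi}^{A_1,A_2}\{f\}(\mathbf{w,u})|^{2}\,\mathrm{d}\mathbf{w}\,\mathrm{d}\mathbf{u}
\;\geq\;(1-\varepsilon^{2})\,\|G_{\phi}^{A_1,A_2}\{f\}\|_{L^2(\mathbb{R}^4,\mathbb{H})}^{2},
\end{align*}
and then use the Parseval-type identity \eqref{fsxd} to rewrite the right-hand side as $(1-\varepsilon^{2})\|f\|^{2}_{L^2(\mathbb{R}^2,\mathbb{H})}\|\phi\|^{2}_{L^2(\mathbb{R}^2,\mathbb{H})}$.

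Second, I would bound the left-hand side from above by Hölder's inequality with conjugate exponents $s/2$ and $s/(s-2)$ (legitimate since $s>2$), applied to the factors $|G_{\phi}^{A_1,A_2}\{f\}|^{2}$ and $\chi_{\Omega}$:
\begin{align*}
\int_{\Omega}|G_{\phi}^{A_1,A_2}\{f\}(\mathbf{w,u})|^{2}\,\mathrm{d}\mathbf{w}\,\mathrm{d}\mathbf{u}
\;\leq\;|\Omega|^{\,1-\frac{2}{s}}\,\|G_{\phi}^{A_1,A_2}\{f\}\|_{L^{s}(\mathbb{R}^4,\mathbb{H})}^{2}.
\end{align*}
Now I invoke the Lieb inequality \eqref{Li} to control $\|G_{\phi}^{A_1,A_2}\{f\}\|_{L^{s}(\mathbb{R}^4,\mathbb{H})}^{2}$ by $\bigl(|\mathbf{b}|^{1/s-1/2}/(2\pi)\bigr)^{2}D_{s,s'}^{2}\|f\|^{2}_{L^2}\|\phi\|^{2}_{L^2}$.

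Finally, I chain the two estimates together and cancel the common factor $\|f\|^{2}_{L^2(\mathbb{R}^2,\mathbb{H})}\|\phi\|^{2}_{L^2(\mathbb{R}^2,\mathbb{H})}$ (which is nonzero by hypothesis), obtaining
\begin{align*}
1-\varepsilon^{2}\;\leq\;|\Omega|^{\,\frac{s-2}{s}}\,|\mathbf{b}|^{\,\frac{2-s}{s}}\left(\frac{D_{s,s'}}{2\pi}\right)^{2}.
\end{align*}
Raising both sides to the power $s/(s-2)>0$ and rearranging produces exactly the stated inequality, since $(2-s)/s\cdot s/(s-2)=-1$ contributes the factor $|\mathbf{b}|$, and the $D_{s,s'}/(2\pi)$ factor is moved to the left-hand side with exponent $2s/(2-s)$.

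The argument itself is short; the main obstacle is purely bookkeeping, namely verifying the exponent juggling $(1-2/s)\cdot s/(s-2)=1$ and $(1/s-1/2)\cdot 2\cdot s/(s-2)=-1$ so that the power of $|\Omega|$ is exactly one and the power of $|\mathbf{b}|$ matches the statement. No new analytical tool beyond Parseval \eqref{fsxd}, Hölder, and the Lieb inequality (33) is needed.
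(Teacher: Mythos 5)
Your proposal is correct and takes essentially the same route as the paper's proof: the $\epsilon$-concentration hypothesis combined with the Parseval identity gives the lower bound $(1-\varepsilon^{2})\|f\|^{2}\|\phi\|^{2}$ on the mass over $\Omega$, H\"older's inequality with exponents $s/2$ and $s/(s-2)$ followed by the Lieb inequality gives the matching upper bound, and the same exponent bookkeeping yields the stated estimate. No substantive differences.
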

\begin{proof}
Because $G_{\phi}^{A_1,A_2}\{f\}$ is $\epsilon-$concentrated on $\Omega$, according to the Definition 4 and the \eqref{fsxd}, it implies that
\begin{align}
		\begin{split}
\|\chi_{\Omega^{c}}G_{\phi}^{A_1,A_2}\{f\}\|_{L^2(\mathbb{R}^4,\mathbb{H})}
\leq \varepsilon \|G_{\phi}^{A_1,A_2}\{f\}\|_{L^2(\mathbb{R}^4,\mathbb{H})}=\varepsilon\|f\|_{L^2(\mathbb{R}^2,\mathbb{H})}\|\phi\|_{L^2(\mathbb{R}^2,\mathbb{H})},
		\end{split}
	\end{align}
hence
\begin{align}\label{dd}
		\begin{split}
\|\chi_{\Omega}G_{\phi}^{A_1,A_2}\{f\}\|^{2}_{L^2(\mathbb{R}^4,\mathbb{H})}&=\|G_{\phi}^{A_1,A_2}\{f\}\|^{2}_{L^2(\mathbb{R}^4,\mathbb{H})}-
\|\chi_{\Omega^{c}}G_{\phi}^{A_1,A_2}\{f\}\|^{2}_{L^2(\mathbb{R}^4,\mathbb{H})}\\&
\geq (1-\varepsilon^{2})\|f\|^{2}_{L^2(\mathbb{R}^2,\mathbb{H})}\|\phi\|^{2}_{L^2(\mathbb{R}^2,\mathbb{H})},
		\end{split}
	\end{align}
and applying H$\ddot{o}$lder inequality, then
\begin{align}
		\begin{split}
\|\chi_{\Omega}G_{\phi}^{A_1,A_2}\{f\}\|^{2}_{L^2(\mathbb{R}^4,\mathbb{H})}
&=\int_{\mathbb{R}^2}\int_{\mathbb{R}^2}\chi_{\Omega}(\mathbf{w,u})|G_{\phi}^{A_1,A_2}\{f\}(\mathbf{w,u})|^{2}\rm{d}\mathbf{w}\rm{d}\mathbf{u}\\
&\leq \Big(\int_{\mathbb{R}^2}\int_{\mathbb{R}^2}(\chi_{\Omega}(\mathbf{w,u}))^{\frac{s}{s-2}}\rm{d}\mathbf{w}\rm{d}\mathbf{u}\Big)^{\frac{s-2}{s}}\\
&\times \Big(\int_{\mathbb{R}^2}\int_{\mathbb{R}^2}(|G_{\phi}^{A_1,A_2}\{f\}(\mathbf{w,u})|^{2})^{\frac{s}{2}}\rm{d}\mathbf{w}\rm{d}\mathbf{u}\Big)^{\frac{2}{s}}\\
&=|\Omega|^{\frac{s-2}{s}} \times \|G_{\phi}^{A_1,A_2}\{f\}\|^{2}_{L^s(\mathbb{R}^4,\mathbb{H})}.
		\end{split}
	\end{align}
In addition, by Lieb inequality, the above inequality deduce that
\begin{align}\label{w}
		\begin{split}
&\|\chi_{\Omega}G_{\phi}^{A_1,A_2}\{f\}\|^{2}_{L^2(\mathbb{R}^4,\mathbb{H})}\\
&\leq |\Omega|^{\frac{s-2}{s}}
\Big(\frac{|\mathbf{b}|^{\frac{1}{s}-\frac{1}{2}}}{2\pi}D_{s,s'}\Big)^{2}\|\textit{f}\|^{2}_{L^{2}(\mathbb{R}^2,\mathbb{H})}\|\phi\|^{2}_{L^{2}(\mathbb{R}^2,\mathbb{H})}.
		\end{split}
	\end{align}
According to \eqref{dd} and \eqref{w}, we obtain
\begin{align}
		\begin{split}
1-\varepsilon^{2}
&\leq |\Omega|^{\frac{s-2}{s}}
\Big(\frac{|\mathbf{b}|^{\frac{1}{s}-\frac{1}{2}}}{2\pi}D_{s,s'}\Big)^{2}.
		\end{split}
	\end{align}
So
\begin{align}
		\begin{split}
|\mathbf{b}|(1-\varepsilon^{2})^{\frac{s}{s-2}}\left(\frac{D_{s,s'}}{2\pi}\right)^{\frac{2s}{2-s}}
&\leq |\Omega|.
		\end{split}
	\end{align}
\end{proof}
\begin{corollary}
While $A_1=A_2=\begin{bmatrix}
	0&1\\
	-1&0
	\end{bmatrix}$,
the  Lieb uncertainty principle of the QWLCT results in the Lieb uncertainty principle of the QWFT \cite{41}.
\end{corollary}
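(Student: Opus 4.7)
The plan is to observe that when $A_1=A_2=\begin{bmatrix}0&1\\-1&0\end{bmatrix}$, the QWLCT collapses to a unit-modulus quaternionic scalar multiple of the two-sided QWFT, so that the Lieb uncertainty bound from the preceding theorem specializes term-by-term to the QWFT version recorded in \cite{41}. The argument is purely mechanical substitution: the structural content of the corollary is that the QWLCT degenerates to a constant multiple of the QWFT at this particular choice of matrix parameters, and that the prefactor $|\mathbf{b}|$ in the bound evaluates to $1$.

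First, I would read off the parameters $a_i=d_i=0$, $b_i=1$, $c_i=-1$ for $i=1,2$, and substitute them into kernels (12)--(13). Only the first branch applies since $b_i\neq 0$, and the kernels simplify to
\begin{equation*}
K_{A_1}^{\mathbf{i}}(x_1,\omega_1)=\tfrac{1}{\sqrt{2\pi}}\,e^{-\mathbf{i}\pi/4}\,e^{-\mathbf{i}x_1\omega_1},\qquad K_{A_2}^{\mathbf{j}}(x_2,\omega_2)=\tfrac{1}{\sqrt{2\pi}}\,e^{-\mathbf{j}\pi/4}\,e^{-\mathbf{j}x_2\omega_2}.
\end{equation*}
Feeding these into Definition 3 and factoring the scalar and unit-modulus quaternionic phases outside the integral produces
\begin{equation*}
G^{A_1,A_2}_{\phi}\{f\}(\mathbf{w,u})=\tfrac{1}{2\pi}\,e^{-\mathbf{i}\pi/4}\,G_{\phi}\{f\}(\mathbf{w,u})\,e^{-\mathbf{j}\pi/4},
\end{equation*}
where $G_{\phi}\{f\}$ is the two-sided QWFT. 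Since $|e^{-\mathbf{i}\pi/4}|=|e^{-\mathbf{j}\pi/4}|=1$ and the quaternionic modulus is multiplicative (see (5)), the pointwise identity $|G^{A_1,A_2}_{\phi}\{f\}(\mathbf{w,u})|=\tfrac{1}{2\pi}|G_{\phi}\{f\}(\mathbf{w,u})|$ holds, so $\varepsilon$-concentration of either transform on a measurable set $\Omega$ is equivalent to $\varepsilon$-concentration of the other on the same $\Omega$.

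Second, I would substitute $|\mathbf{b}|=|b_1b_2|=1$ into the Lieb uncertainty bound from the previous theorem. The $|\mathbf{b}|$ prefactor disappears, leaving
\begin{equation*}
(1-\varepsilon^{2})^{\frac{s}{s-2}}\left(\tfrac{D_{s,s'}}{2\pi}\right)^{\frac{2s}{2-s}}\leq|\Omega|,
\end{equation*}
which is exactly the Lieb uncertainty principle for the two-sided QWFT in \cite{41}. Combining the two steps yields the corollary with no further analytic ingredient. The only delicate point — and really the only potential obstacle — is bookkeeping of normalization: one must verify that the $\frac{1}{2\pi}$ absorbed into $G^{A_1,A_2}_{\phi}\{f\}$ is consistent with whichever convention \cite{41} uses for the QWFT. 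If \cite{41} bakes $\frac{1}{2\pi}$ (equivalently $\frac{1}{\sqrt{2\pi}}$ per kernel) into its definition, the inequalities match verbatim; otherwise the factor is transported into the stated QWFT bound as a harmless rescaling. This is cosmetic rather than substantive, so I expect no real difficulty in completing the proof.
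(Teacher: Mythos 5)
Your proposal is correct and coincides with the paper's (implicit) argument: the paper states this corollary without proof, the intended justification being exactly the mechanical specialization you carry out, namely that for $a_i=d_i=0$, $b_i=1$ the kernels reduce to $\frac{1}{\sqrt{2\pi}}e^{-\mathbf{i}\pi/4}e^{-\mathbf{i}x_1\omega_1}$ and $\frac{1}{\sqrt{2\pi}}e^{-\mathbf{j}\pi/4}e^{-\mathbf{j}x_2\omega_2}$, so that $|G^{A_1,A_2}_{\phi}\{f\}|$ is a constant multiple of $|G_{\phi}\{f\}|$, $\varepsilon$-concentration is preserved, and setting $|\mathbf{b}|=1$ in Theorem~6 gives the QWFT bound. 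Your remark about reconciling the $\frac{1}{2\pi}$ normalization with the convention of \cite{41} is the only bookkeeping point, and you handle it appropriately.
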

\subsection{Donoho-Stark's uncertainty principle}
In this subsection, according to the relationship between the QWLCT and the QFT, we obtain Donoho-Stark's uncertainty principle for QWLCT.
\begin{lemma}[Donoho-Stark's uncertainty principle for QFT]\cite{36} \label{dqft}
Let $f\in L^2(\mathbb{R}^2,\mathbb{H})$ with $f\neq0$ is $\epsilon_{\Lambda}-$concentrated on $\Lambda \subseteq\mathbb{R}^2$,
and $F_{Q}(f)$ is $\epsilon_{\Xi}-$concentrated on $\Xi \subseteq\mathbb{R}^2$. Then
\begin{align}
		\begin{split}
|\Lambda||\Xi|\geq2\pi (1-\epsilon_{\Lambda}-\epsilon_{\Xi})^{2}.
		\end{split}
	\end{align}
\end{lemma}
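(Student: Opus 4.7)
The plan is to adapt the classical Donoho--Stark argument to the quaternion Fourier setting. The key idea is to introduce the time-limiting operator $P_{\Lambda} g := \chi_{\Lambda} g$ and the frequency-limiting operator $Q_{\Xi} g := F_{Q}^{-1}(\chi_{\Xi}F_{Q}(g))$, both well-defined bounded linear operators on $L^{2}(\mathbb{R}^{2},\mathbb{H})$ since $\chi_{\Lambda},\chi_{\Xi}$ are real-valued and therefore commute with the quaternion kernel factors. By the Plancherel formula for the QFT, $\|P_{\Lambda}\|_{\rm op}\le 1$ and $\|Q_{\Xi}\|_{\rm op}\le 1$. The plan is to show that $Q_{\Xi}P_{\Lambda}f$ captures a definite fraction of $f$, and then to upper-bound the operator norm of $Q_{\Xi}P_{\Lambda}$ by $\sqrt{|\Lambda||\Xi|}/(2\pi)$; comparing the two inequalities yields the desired lower bound on $|\Lambda||\Xi|$.

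First I would rephrase the two concentration hypotheses as operator inequalities: $\|f-P_{\Lambda}f\|_{L^{2}}=\|\chi_{\Lambda^{c}}f\|_{L^{2}}\le\epsilon_{\Lambda}\|f\|_{L^{2}}$ directly from the definition, and $\|f-Q_{\Xi}f\|_{L^{2}}\le\epsilon_{\Xi}\|f\|_{L^{2}}$ after using Plancherel on $F_{Q}(f)-\chi_{\Xi}F_{Q}(f)$. Then a single application of the triangle inequality together with $\|Q_{\Xi}\|_{\rm op}\le 1$ gives
\begin{equation*}
\|f-Q_{\Xi}P_{\Lambda}f\|_{L^{2}}\le \|f-Q_{\Xi}f\|_{L^{2}}+\|Q_{\Xi}(f-P_{\Lambda}f)\|_{L^{2}}\le(\epsilon_{\Lambda}+\epsilon_{\Xi})\|f\|_{L^{2}},
\end{equation*}
so $\|Q_{\Xi}P_{\Lambda}\|_{\rm op}\ge 1-\epsilon_{\Lambda}-\epsilon_{\Xi}$.

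Next comes the main quantitative step: bounding the operator norm of $Q_{\Xi}P_{\Lambda}$ from above. Using Plancherel again, $\|Q_{\Xi}P_{\Lambda}f\|_{L^{2}}^{2}$ reduces (up to the QFT normalization constant) to $\|\chi_{\Xi}F_{Q}(\chi_{\Lambda}f)\|_{L^{2}}^{2}$. For each fixed $\mathbf{w}\in\Xi$ I would apply the quaternion triangle inequality (exploiting that $|e^{-\mathbf{i}y_{1}\omega_{1}}|=|e^{-\mathbf{j}y_{2}\omega_{2}}|=1$) followed by the classical Cauchy--Schwarz inequality to obtain $|F_{Q}(\chi_{\Lambda}f)(\mathbf{w})|\le\sqrt{|\Lambda|}\,\|f\|_{L^{2}}$. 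Integrating this pointwise bound over $\mathbf{w}\in\Xi$ then yields $\|Q_{\Xi}P_{\Lambda}\|_{\rm op}^{2}\le |\Lambda||\Xi|/(2\pi)$ (or a similar expression, with the exact constant dictated by the QFT normalization). Combining this with the lower bound from the triangle inequality step produces $|\Lambda||\Xi|\ge 2\pi(1-\epsilon_{\Lambda}-\epsilon_{\Xi})^{2}$.

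The step I would expect to require the most care is the operator-norm upper bound on $Q_{\Xi}P_{\Lambda}$, because the two-sided QFT kernel sandwiches $f$ between noncommuting exponentials, so one cannot simply recycle the scalar argument verbatim. The saving grace is that the cutoff multipliers are real-valued and that the quaternion norm is multiplicative on products of exponentials of modulus one, which reduces the pointwise kernel estimate to a scalar Cauchy--Schwarz inequality. The only other subtlety is keeping track of the $(2\pi)$-factors coming from the unnormalized QFT Plancherel identity so that the final constant agrees with the $2\pi$ in the statement; everything else is a routine transcription of the classical Donoho--Stark argument.
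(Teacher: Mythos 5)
The paper offers no proof of this lemma (it is imported from \cite{36}), and your argument is the standard Donoho--Stark operator proof---time-limiting $P_{\Lambda}$, band-limiting $Q_{\Xi}$, a triangle-inequality lower bound and a Cauchy--Schwarz/Plancherel upper bound on $\|Q_{\Xi}P_{\Lambda}\|_{\mathrm{op}}$---which is exactly how the cited source proceeds, and it does carry over to the two-sided QFT for the reasons you identify: $\chi_{\Xi}$ is real-valued, and $|e^{-\mathbf{i}x_{1}\omega_{1}}f(\mathbf{x})e^{-\mathbf{j}x_{2}\omega_{2}}|=|f(\mathbf{x})|$ by multiplicativity of the quaternion modulus. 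The one slip is the constant in your operator-norm bound: with the paper's unnormalized QFT the pointwise estimate combined with Plancherel gives $\|Q_{\Xi}P_{\Lambda}\|_{\mathrm{op}}^{2}\le|\Lambda||\Xi|/(2\pi)^{2}$ rather than $|\Lambda||\Xi|/(2\pi)$, which yields the stronger conclusion $|\Lambda||\Xi|\ge(2\pi)^{2}(1-\epsilon_{\Lambda}-\epsilon_{\Xi})^{2}$ and in particular implies the stated inequality with $2\pi$.
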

Inspired by the Donoho-Stark's uncertainty principle for QFT, we can obtain the Donoho-Stark's uncertainty principle for QWLCT.
\begin{theorem}[Donoho-Stark's uncertainty principle for QWLCT]
Suppose that the nonzero signal quaternion function $\mathcal{L}_{A_1,A_2}^{-1}[G_{\phi}^{A_1,A_2}\{f\}](\mathbf{x,u})\in L^2(\mathbb{R}^2,\mathbb{H})$
is $\epsilon_{\Lambda}-$concentrated on $\Lambda \subseteq\mathbb{R}^2$, and $G_{\phi}^{A_1,A_2}\{f\}(\mathbf{w,u})$ is \\
$\epsilon_{\Xi}-$concentrated on $\Xi \subseteq\mathbb{R}^2$.
Then
\begin{align}
		\begin{split}
|\Lambda||\mathbf{b}\Xi|\geq2\pi (1-\epsilon_{\Lambda}-\epsilon_{\Xi})^{2}.
		\end{split}
	\end{align}
\end{theorem}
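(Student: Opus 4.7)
The plan is to reduce the theorem to Lemma~\ref{dqft} (Donoho--Stark for the QFT) applied to the auxiliary signal $h(\mathbf{x,u})$ introduced in (19), by means of the intertwining relation (18) between the QWLCT and the QFT. The key observation driving the reduction is that both the inversion formula (17) and the identity (18) only involve chirp factors of unit modulus, so pointwise moduli, and hence every $\epsilon$-concentration statement, are preserved without loss in the transfer between the QWLCT picture and the $(h, F_Q(h))$ picture.

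First I would translate the two concentration hypotheses. By (17), $\mathcal{L}^{-1}_{A_1,A_2}[G_\phi^{A_1,A_2}\{f\}](\mathbf{x,u}) = f(\mathbf{x})\overline{\phi(\mathbf{x-u})}$, and (19) therefore gives $|h(\mathbf{x,u})| = |\mathcal{L}^{-1}_{A_1,A_2}[G_\phi^{A_1,A_2}\{f\}](\mathbf{x,u})|$, so $h$ is $\epsilon_\Lambda$-concentrated on $\Lambda$ in the $\mathbf{x}$ variable. Taking moduli in (18) yields $|G_\phi^{A_1,A_2}\{f\}(\mathbf{w,u})|^2 = \tfrac{1}{4\pi^2|\mathbf{b}|}|F_Q(h)(\mathbf{w}/\mathbf{b},\mathbf{u})|^2$, and the substitution $\xi=\mathbf{w}/\mathbf{b}$, whose Jacobian $|b_1b_2|$ exactly cancels the prefactor $1/|\mathbf{b}|$, shows that the $\epsilon_\Xi$-concentration of $G_\phi^{A_1,A_2}\{f\}$ on $\Xi$ passes to the $\epsilon_\Xi$-concentration of $F_Q(h)$ on $\mathbf{b}^{-1}\Xi := \{(\omega_1/b_1, \omega_2/b_2) : (\omega_1,\omega_2)\in\Xi\}$.

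Second, I would apply Lemma~\ref{dqft} to $h$ and $F_Q(h)$ to obtain $|\Lambda|\,|\mathbf{b}^{-1}\Xi|\geq 2\pi(1-\epsilon_\Lambda-\epsilon_\Xi)^2$. Since $|\mathbf{b}^{-1}\Xi| = |\Xi|/|b_1b_2|$ by the same Jacobian, and with the paper's notation $|\mathbf{b}\Xi|=|b_1b_2|\,|\Xi|$, multiplying both sides by $|b_1b_2|$ produces exactly the desired $|\Lambda|\,|\mathbf{b}\Xi|\geq 2\pi(1-\epsilon_\Lambda-\epsilon_\Xi)^2$.

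The main obstacle I anticipate is that Lemma~\ref{dqft} is formulated for signals in $L^2(\mathbb{R}^2,\mathbb{H})$, whereas $h(\mathbf{x,u})$ and $F_Q(h)(\xi,\mathbf{u})$ carry the extra parameter $\mathbf{u}$. The clean remedy is to apply Lemma~\ref{dqft} slicewise for each fixed $\mathbf{u}$, with slicewise concentration rates $\epsilon_\Lambda(\mathbf{u})$ and $\epsilon_\Xi(\mathbf{u})$, and then average against the probability measure $d\mu(\mathbf{u}) = \|h(\cdot,\mathbf{u})\|_{L^2(\mathbb{R}^2,\mathbb{H})}^2 / \|h\|_{L^2(\mathbb{R}^4,\mathbb{H})}^2\,d\mathbf{u}$. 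Because QFT--Plancherel acts within each $\mathbf{x}$-slice, this single weight simultaneously represents both joint concentrations as expectations, i.e.\ $\int\epsilon_\Lambda(\mathbf{u})^2\,d\mu = \epsilon_\Lambda^2$ and $\int\epsilon_\Xi(\mathbf{u})^2\,d\mu = \epsilon_\Xi^2$; combining the slicewise bound $1-\epsilon_\Lambda(\mathbf{u})-\epsilon_\Xi(\mathbf{u})\leq \sqrt{|\Lambda|\,|\mathbf{b}^{-1}\Xi|/(2\pi)}$ with Jensen's inequality (exploiting the concavity of $\sqrt{\cdot}$) converts the expectation of $\epsilon_\Lambda(\mathbf{u})+\epsilon_\Xi(\mathbf{u})$ into the upper bound $\epsilon_\Lambda+\epsilon_\Xi$, yielding the joint inequality without any deterioration in the constant.
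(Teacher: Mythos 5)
Your proposal is correct and follows essentially the same route as the paper: transfer the $\Lambda$-concentration to $h$ via (17) and (19) (the chirps are unimodular), transfer the $\Xi$-concentration to $F_Q(h)$ via the modulus of (18) and the dilation $\mathbf{w}\mapsto\mathbf{w}/\mathbf{b}$, and then invoke Lemma~5. The two refinements you add are sound and in fact patch steps the paper glosses over --- the paper fixes $\mathbf{u}$ implicitly rather than doing your slicewise application of Lemma~5 followed by averaging against $\|h(\cdot,\mathbf{u})\|_{L^2}^2\,{\rm d}\mathbf{u}/\|h\|_{L^2}^2$ with Cauchy--Schwarz, and it writes the dilated set as ``$\mathbf{b}\Xi$'' without resolving the direction of the scaling --- but note that your closing reconciliation (``multiply both sides by $|b_1b_2|$'') only recovers the stated inequality if $|\mathbf{b}\Xi|$ is read as the measure of the image set $\{\mathbf{w}/\mathbf{b}:\mathbf{w}\in\Xi\}$, namely $|\Xi|/|b_1b_2|$, which is what your Lemma~5 application produces directly, and not as $|b_1b_2|\,|\Xi|$.
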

\begin{proof}
By applying \eqref{hn} and \eqref{hf}, we have
\begin{align}
		\begin{split}
		h(\mathbf{x,u})&=e^{\mathbf{i}\frac{a_1}{2b_1}x_1^2}\mathcal{L}_{A_1,A_2}^{-1}\{G^{A_1,A_2}_{\phi}\{f\}\}(\mathbf{x,u})e^{\mathbf{j}\frac{a_2}{2b_2}x_2^2}.
		\end{split}
	\end{align}
Since $\mathcal{L}_{A_1,A_2}^{-1}[G_{\phi}^{A_1,A_2}\{f\}](\mathbf{x,u})\in L^2(\mathbb{R}^2,\mathbb{H})$
is $\epsilon_{\Lambda}-$concentrated on $\Lambda \subseteq\mathbb{R}^2$, then
$h(\mathbf{x,u})\in L^2(\mathbb{R}^2,\mathbb{H})$
is $\epsilon_{\Lambda}-$concentrated on $\Lambda \subseteq\mathbb{R}^2$.

On the other hand, from $G_{\phi}^{A_1,A_2}\{f\}(\mathbf{w,u})$ is $\epsilon_{\Xi}-$concentrated on $\Xi \subseteq\mathbb{R}^2$,
based on the relationship (18), we know that $F_{Q}(h)\left(\frac{\mathbf{w}}{\mathbf{b}},\mathbf{u}\right)$ is $\epsilon_{\Xi}-$concentrated on $\Xi \subseteq\mathbb{R}^2$,
that is to say, $F_{Q}(h)\left(\mathbf{w},\mathbf{u}\right)$ is $\epsilon_{\Xi}-$concentrated on $\mathbf{b}\Xi \subseteq\mathbb{R}^2$.

Hence, according to Lemma 5, we obtain
\begin{align*}
		\begin{split}
|\Lambda||\mathbf{b}\Xi|\geq2\pi (1-\epsilon_{\Lambda}-\epsilon_{\Xi})^{2}.
		\end{split}
	\end{align*}
\end{proof}
\begin{corollary}
While $\mathcal{L}_{A_1,A_2}^{-1}[G_{\phi}^{A_1,A_2}\{f\}](\mathbf{x,u})\in L^2(\mathbb{R}^2,\mathbb{H})$,\\ $supp\mathcal{L}_{A_1,A_2}^{-1}[G_{\phi}^{A_1,A_2}\{f\}](\mathbf{x,u})\subseteq\Lambda$ and
$suppG_{\phi}^{A_1,A_2}\{f\}(\mathbf{w,u})\subseteq\Xi$, then
\begin{align*}
		\begin{split}
|\Lambda||\mathbf{b}\Xi|\geq2\pi .
		\end{split}
	\end{align*}
\end{corollary}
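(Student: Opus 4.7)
The plan is to derive this corollary as the zero-tolerance specialization of the preceding Donoho--Stark theorem for the QWLCT. The only substantive step is to observe that strict support containment is exactly the $\varepsilon = 0$ case of $\varepsilon$-concentration, after which the bound $2\pi(1-\varepsilon_\Lambda-\varepsilon_\Xi)^2$ collapses to $2\pi$ and we are done.

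First I would verify the reduction to the theorem. Suppose $\mathrm{supp}\,\mathcal{L}_{A_1,A_2}^{-1}[G_{\phi}^{A_1,A_2}\{f\}](\mathbf{x,u}) \subseteq \Lambda$. Then $\mathcal{L}_{A_1,A_2}^{-1}[G_{\phi}^{A_1,A_2}\{f\}](\mathbf{x,u}) = 0$ almost everywhere on $\Lambda^{c}$, so
\begin{align*}
\|\chi_{\Lambda^{c}}\mathcal{L}_{A_1,A_2}^{-1}[G_{\phi}^{A_1,A_2}\{f\}]\|_{L^2(\mathbb{R}^2,\mathbb{H})} = 0 \leq 0 \cdot \|\mathcal{L}_{A_1,A_2}^{-1}[G_{\phi}^{A_1,A_2}\{f\}]\|_{L^2(\mathbb{R}^2,\mathbb{H})},
\end{align*}
which by Definition 4 means that $\mathcal{L}_{A_1,A_2}^{-1}[G_{\phi}^{A_1,A_2}\{f\}](\mathbf{x,u})$ is $\varepsilon_\Lambda$-concentrated on $\Lambda$ with $\varepsilon_\Lambda = 0$. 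Identically, $\mathrm{supp}\, G_{\phi}^{A_1,A_2}\{f\}(\mathbf{w,u})\subseteq \Xi$ yields $\varepsilon_\Xi = 0$ with respect to $\Xi$.

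Next I would invoke the preceding Donoho--Stark theorem for the QWLCT, which has already been proved via the QFT identity \eqref{WF}, the chirp reformulation \eqref{hf}, and the scaling $\mathbf{w}\mapsto \mathbf{w}/\mathbf{b}$ that sends the spectral set $\Xi$ to $\mathbf{b}\Xi$. Inserting $\varepsilon_\Lambda = \varepsilon_\Xi = 0$ into the conclusion $|\Lambda||\mathbf{b}\Xi|\geq 2\pi(1-\varepsilon_\Lambda-\varepsilon_\Xi)^{2}$ yields immediately
\begin{align*}
|\Lambda||\mathbf{b}\Xi|\geq 2\pi,
\end{align*}
which is the desired inequality.

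There is essentially no obstacle here: the only potential subtlety is the convention that a zero function is $0$-concentrated on any set, which forces the nonvanishing hypothesis on $f$ (and hence, via the QWLCT inversion property, on $\mathcal{L}_{A_1,A_2}^{-1}[G_{\phi}^{A_1,A_2}\{f\}]$ and $G_{\phi}^{A_1,A_2}\{f\}$) so that the concentration inequality is nondegenerate. With that nondegeneracy noted, the corollary is a direct specialization, and I would present it in just a few lines rather than repeating the QFT-based argument of the parent theorem.
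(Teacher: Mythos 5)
Your proposal is correct and follows exactly the intended route: the corollary is the $\varepsilon_\Lambda=\varepsilon_\Xi=0$ specialization of the Donoho--Stark theorem for the QWLCT, since by Definition 4 support containment is precisely $0$-concentration, and the paper itself offers no separate argument beyond this substitution. Your remark about nondegeneracy (the nonzero hypothesis on $f$) is a sensible addition but not a departure from the paper's approach.
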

\section{ Numerical example and potential application}
The uncertainty principles for the QWLCT have been
studied in this paper. In this section, to show the correctness and useful of the theorems, an numerical
example is given to verify the result, and the potential applications are also presented
to show the importance of the theorems.

\textbf{Example:} Consider a signal
\begin{align*}
		\begin{split}
f(\mathbf{x})=\frac{1}{(\pi\beta)^{\frac{1}{2}}}e^{ix_{1}u_{0}}e^{-\frac{|\mathbf{x}|^{2}}{2\beta}}e^{ix_{2}v_{0}},
		\end{split}
	\end{align*}
and the window function is
\begin{align*}
		\begin{split}
\phi(\mathbf{x})=2\sqrt{\frac{\pi}{\beta}}e^{ix_{1}u_{0}}e^{-\frac{|\mathbf{x}|^{2}}{2\beta}}e^{ix_{2}v_{0}},
		\end{split}
	\end{align*}
where $\beta>0$,$u_{0}$ and $v_{0}$ are real constants.

It follows that
\begin{align*}
		\begin{split}
\|f\|_{L^2(\mathbb{R}^2,\mathbb{H})}^{2}&=\int_{\mathbb{R}^2}|f(\mathbf{x})|^{2}\rm{d}\mathbf{x}\\
&=\frac{1}{\pi\beta}\int_{\mathbb{R}^2}e^{-\frac{|\mathbf{x}|^{2}}{\beta}}\rm{d}\mathbf{x}\\
&=1,
		\end{split}
	\end{align*}
\begin{align*}
		\begin{split}
\|\phi\|^{2}_{L^2(\mathbb{R}^2,\mathbb{H})}&=\int_{\mathbb{R}^2}|\phi(\mathbf{x})|^{2}\rm{d}\mathbf{x}\\
&=\frac{4\pi^{2}}{\pi\beta}\int_{\mathbb{R}^2}e^{-\frac{|\mathbf{x}|^{2}}{\beta}}\rm{d}\mathbf{x}\\
&=4\pi^{2}.
		\end{split}
	\end{align*}
Therefore, the Logarithmic uncertainty principle for the QWLCT becomes
\begin{align}
		\begin{split}
&\int_{\mathbb{R}^2}\ln|\mathbf{x}||f(\mathbf{x})|^{2}\rm{d}\mathbf{x}+
\int_{\mathbb{R}^2}\int_{\mathbb{R}^2}\ln|\mathbf{w}||G_{\phi}^{A_1,A_2}\{\textit{f}\}(\mathbf{w,u})|^{2}\rm{d}\mathbf{w}\rm{d}\mathbf{u}\\&\geq
\Delta+\ln|\mathbf{b}|.
		\end{split}
	\end{align}
By Jensen's inequality \cite{36}, Logarithmic uncertainty principle for the QWLCT implies
Heisenberg-Weyl's uncertainty principle for the QWLCT
\begin{align}
		\begin{split}
\left(\int_{\mathbb{R}^2}|\mathbf{x}|^{2}|f(\mathbf{x})|^{2}\rm{d}\mathbf{x}
\int_{\mathbb{R}^2}\int_{\mathbb{R}^2}|\mathbf{w}|^{2}
|G_{\phi}^{A_1,A_2}\{\textit{f}\}(\mathbf{w,u})|^{2}\rm{d}\mathbf{w}\rm{d}\mathbf{u}\right)^{\frac{1}{2}}\geq
\frac{|\textbf{b}|}{4\pi}.
		\end{split}
	\end{align}
Let $A_1=A_2=\begin{bmatrix}
	0&\frac{1}{4}\\
	-\frac{1}{4}&1
	\end{bmatrix}$, $u_{0}=v_{0}=0$ and $\beta=\frac{1}{16}$, then
\begin{align*}
		\begin{split}
		h(\mathbf{x,u})&=e^{\mathbf{i}\frac{a_1}{2b_1}x_1^2}f(\mathbf{x})\overline{\phi(\mathbf{x-u})}
e^{\mathbf{j}\frac{a_2}{2b_2}x_2^2}\\
&=\frac{2}{\beta}
e^{-\frac{|\mathbf{x}|^{2}}{2\beta}}e^{-\frac{|\mathbf{x-u}|^{2}}{2\beta}},
		\end{split}
	\end{align*}
so
\begin{align*}
		\begin{split}
		F_{Q}(h)\left(\frac{\mathbf{w}}{\mathbf{b}},\mathbf{u}\right)&=
\int_{\mathbb{R}^2}e^{-\mathbf{i}x_1\frac{\omega_1}{b_{1}}}\frac{2}{\beta}
e^{-\frac{|\mathbf{x}|^{2}}{2\beta}}e^{-\frac{|\mathbf{x-u}|^{2}}{2\beta}}
e^{-\mathbf{j}x_2\frac{\omega_1}{b_{1}}}\rm{d}\mathbf{x}\\
&=2\pi e^{-\frac{u_{1}^{2}}{4\beta}-\frac{\beta w^{2}_{1}}{4b^{2}_{1}}-\textbf{i}\frac{u_{1}w_{1}}{2b_{1}}}
e^{-\frac{u_{1}^{2}}{4\beta}-\frac{\beta w^{2}_{2}}{4b^{2}_{2}}-\textbf{j}\frac{u_{2}w_{2}}{2b_{2}}}
		\end{split}
	\end{align*}
According to (18), we have
\begin{align*}
		\begin{split}
		&G^{A_{1},A_{2}}_{\phi}\{f\}(\mathbf{w,u})\\&=\frac{1}{\sqrt{ b_{1}b_2}}e^{\mathbf{i}\left(\frac{d_1}{2b_1}\omega_1^2-\frac{\pi}{4}-\frac{u_{1}w_{1}}{2b_{1}} \right)}e^{-\frac{u_{1}^{2}}{4\beta}-\frac{\beta w^{2}_{1}}{4b^{2}_{1}}}
e^{-\frac{u_{1}^{2}}{4\beta}-\frac{\beta w^{2}_{2}}{4b^{2}_{2}}}
e^{\mathbf{j}\left(\frac{d_2}{2}\omega_2^2-\frac{\pi}{4}-\frac{u_{2}w_{2}}{2b_{2}} \right)}.
		\end{split}
	\end{align*}
Moreover
\begin{align}
		\begin{split}
\int_{\mathbb{R}^2}|\mathbf{x}|^{2}|f(\mathbf{x})|^{2}\rm{d}\mathbf{x}=
\frac{1}{\pi\beta}\int_{\mathbb{R}^2}(x^{2}_{1}+x^{2}_{1})e^{-\frac{|\mathbf{x}|^{2}}{\beta}}\rm{d}\mathbf{x}
=\beta=\frac{1}{16},
		\end{split}
	\end{align}
and
\begin{align}
		\begin{split}
&\int_{\mathbb{R}^2}\int_{\mathbb{R}^2}|\mathbf{w}|^{2}
|G_{\phi}^{A_1,A_2}\{\textit{f}\}(\mathbf{w,u})|^{2}\rm{d}\mathbf{w}\rm{d}\mathbf{u}
\\&=\frac{1}{b_{1}b_{2}}\int_{\mathbb{R}^2}\int_{\mathbb{R}^2}(w^{2}_{1}+w^{2}_{1})
e^{-\frac{u_{1}^{2}}{2\beta}-\frac{\beta w^{2}_{1}}{2b^{2}_{1}}}
e^{-\frac{u_{1}^{2}}{2\beta}-\frac{\beta w^{2}_{2}}{2b^{2}_{2}}}\rm{d}\mathbf{w}\rm{d}\mathbf{u}\\
&=\frac{4\pi^{2}}{b_{1}b_{2}}(b^{3}_{1}\sqrt{b_{2}}+b^{3}_{2}\sqrt{b_{1}})\\
&=\frac{\pi^{2}}{16^{2}}.
		\end{split}
	\end{align}
Hence
\begin{align}
		\begin{split}
\left(\int_{\mathbb{R}^2}|\mathbf{x}|^{2}|f(\mathbf{x})|^{2}\rm{d}\mathbf{x}
\int_{\mathbb{R}^2}\int_{\mathbb{R}^2}|\mathbf{w}|^{2}
|G_{\phi}^{A_1,A_2}\{\textit{f}\}(\mathbf{w,u})|^{2}\rm{d}\mathbf{w}\rm{d}\mathbf{u}\right)^{\frac{1}{2}}=\frac{\pi}{4}.
		\end{split}
	\end{align}
On the other hand
\begin{align}
		\begin{split}
\frac{|\textbf{b}|}{4\pi}=\frac{\sqrt{2}}{16\pi}.
		\end{split}
	\end{align}
Then, expressions (67) and (68) verify (64). This means that Theorem 3 is verified.

The uncertainty principle has some applications in signal recovery. In \cite{42,49}, the authors studied the problem of signal recovery by uncertainty principles.
The paper \cite{45} evaluated in signal recovery problems where there is an interplay of missing and time-limiting data.
The correlative result has been applied to the window Fourier transform (WFT) as well in \cite{44}.
Recently, the authors \cite{46} extended signal recovery by using local uncertainty principle to the QLCT \cite{49}.
In this section, we will give a potential application for signal recovery by using Donoho-Stark's uncertainty principle for QWLCT.
Let the modified signal $f_{\mathbf{u}}(\mathbf{x})\in L^2(\mathbb{R}^2,\mathbb{H})$ is bandlimited to a set $Q$ of finite measure for the QLCT.
Assume that the receiver is unable to observe all of $f_{\mathbf{u}}(\mathbf{x})$, a certain subset $T$
of $x$-values is unobserved.
Furthermore, the modified signal $f_{\mathbf{u}}(\mathbf{x})$ is transmitted and the received signal is corrupted by observational noise
$n(\mathbf{x})$. Thus the received signal $r(\mathbf{x})$ can be taken to have the form
\begin{align}
		\begin{split}
			r(\mathbf{x})=\begin{cases}
		f_{\mathbf{u}}(\mathbf{x})+n(\mathbf{x}),   &\mathbf{x}\notin T  \\
		0,    &\mathbf{x}\in T
		\end{cases}
		\end{split}
	\end{align}
The receiver's aim is to reconstruct the transmitted signal $f_{\mathbf{u}}(\mathbf{x})$ as nearly as possible, making use of the bandlimited hypothesis and the received data $r(\mathbf{x})$. The uncertainty principle can make stable signal recovery possible as long as $0<|Q||T|< \frac{2\pi}{|\mathbf{b}|}$.
That is to say, if there exists a linear operator $P$ and a constant $\Upsilon$ such that $\|f_{\mathbf{u}}-Pr\|_{L^2(\mathbb{R}^2,\mathbb{H})}\leq\Upsilon\|n\|_{L^2(\mathbb{R}^2,\mathbb{H})}$, where $\Upsilon\leq\left(1-\sqrt{\frac{|\mathbf{b}|}{2\pi}}\sqrt{|Q||T|}\right)^{-1}$. Then $f_{\mathbf{u}}$ can be stably reconstructed from $r$.
The application of the uncertainty principles for QWLCT in signal recovery is our main research direction in the future.

Moreover, the derived results in this paper can be extended to other transforms, such as Wigner Distribution (WD) and ambiguity function
(AF) \cite{40,41}.
\section{Conclusions}

The QWLCT is not only a linear transform, but also has similar properties as the WLCT \cite{8,14}.
In this paper, Pitt's inequality and Lieb inequality for the QWLCT are investigated and different forms of uncertainty principles associated with the QWLCT are proposed.
Firstly, we present some important properties of the QWLCT. These properties have been proved in \cite{37}. Secondly, based on the relationship between the QWLCT and
the QFT, the Pitt's inequality and Lieb inequality associated with the QWLCT are demonstrated. Thirdly,  the uncertainty
principles for the QWLCT such as Logarithmic uncertainty principle, Entropic uncertainty principle, Lieb
uncertainty principle and Donoho-Stark's uncertainty principle are obtained. Finally, we give a numerical example and a potential application in signal recovery by using uncertainty principle. The results can be viewed as a generalized form of other transforms uncertainty relations, such as the QLCT, QWFT and QFT.
 Further
investigations on this topic are now under investigation such as the left-sided QWLCT, the
right-sided QWLCT and the QWLCT directional uncertainty principle. Moreover, we will discuss the physical significance and engineering background of this paper in the further. They will be reported in a forthcoming paper.

\section*{Data availability statement}
Data sharing not applicable to this article as no datasets were generated or analysed during the current study.

\end{document}